\theoremstyle{plain}
\newtheorem*{theorem*}{Theorem}
\newtheorem{theorem}{Theorem}
\newtheorem*{proposition*}{Proposition}
\newtheorem{corollary}[theorem]{Corollary}
\newtheorem*{corollary*}{Corollary}
\newtheorem{lemma}[theorem]{Lemma}
\newtheorem*{lemma*}{Lemma}
\theoremstyle{definition}
\newtheorem*{remark*}{Remark}
\newtheorem*{remarks*}{Remarks}
\newtheorem*{conjecture*}{Conjecture}
\theoremstyle{definition}
\newcommand{\D}{\mathbb{D}}
\newcommand{\C}{\mathbb{C}}
\newcommand{\E}{\mathbb{E}}
\newcommand{\eps}{\epsilon}
\newcommand{\spj}{(\sum_{j=1}^n L_j)(\prod_{j=1}^n L_j)}
\newcommand{\sumj}{\sum_{j=1}^n L_j}
\newcommand{\prodj}{\prod_{j=1}^n L_j}
\newcommand{\Aut}{\operatorname{Aut}}
\newcommand{\Cov}{\operatorname{Cov}}
\newcommand{\Var}{\operatorname{Var}}
\title[Gaussian Analytic functions in the polydisk]
{Gaussian Analytic functions in the polydisk}
\author[Xavier Massaneda] {Xavier Massaneda}
\address{Departament de Matem\`atica Aplicada i An\`alisi,
Universitat  de Bar\-ce\-lo\-na, Gran Via 585, 08071-Bar\-ce\-lo\-na, Spain}
\email{xavier.massaneda@ub.edu}
\author[Bharti Pridhnani] {Bharti Pridhnani}
\address{Departament de Matem\`atica Aplicada i An\`alisi,
Universitat  de Bar\-ce\-lo\-na, Gran Via 585, 08071-Bar\-ce\-lo\-na, Spain}
\email{bharti.pridhnani@ub.edu}
\date{\today}
\keywords{}
\subjclass{}
\begin{document}

\thanks{Authors supported by the Generalitat de Catalunya (grant 2014 SGR 289)  and the Spanish Ministerio de Econom\'ia y Competividad (project MTM2011-27932-C02-01)}.

\begin{abstract}
We study hyperbolic Gaussian analytic functions in the unit polydisk of $\C^n$. Following the scheme previously used in the unit ball we first study the asymptotics of fluctuations of linear statistics as the directional intensities $L_j$, $j=1,\dots,n$ tend to $\infty$. Then we estimate the probability of large deviations of such linear statistics and use the estimate to prove a hole theorem. Our proofs are inspired by the methods of M. Sodin and B. Tsirelson for the one-dimensional case, and B. Shiffman and S. Zelditch for the study of the analogous problem for compact K\"ahler manifolds. 
\end{abstract}

\maketitle

\section*{Introduction}

This paper studies some properties of the zero sets of Gaussian analytic functions in the polydisk. The plan of the paper and the techniques of the proofs are the same as in \cite{BMP}, where the analogous problems in the unit ball were dealt with, so we will often just outline the proofs and refer to \cite{BMP} for the details.

Consider the unit polydisk in $\mathbb C^n$
\[
 \mathbb D^n=\bigl\{z\in\C^n : |z_j|<1, \ j=1,\dots, n\bigr\}
\]
and the normalised invariant measure 
\[
 d\nu_n(z)=\frac{dm(z)}{\pi^n [1-|z|^2]^2}\ ,
\]
where $dm$ stands for the Lebesgue measure. We simply write $d\nu$ when no confusion about the dimension can arise.

Here and throughout the paper we use the standard notations
\[
 [1-|z|^2]=\prod_{j=1}^n (1-|z_j|^2)
\]
and, for $L=(L_1,\dots,L_n)$, 
\[
 [1-|z|^2]^L=\prod_{j=1}^n (1-|z_j|^2)^{L_j}\ .
\]

Given a vector $L$ with $L_j>1$, $j=1,\dots, n$, consider the weighted Bergman space
\[
 B_L(\mathbb D^n)=\bigl\{f\in H(\D^n) : \|f\|_{n,L}^2:=c_{n,L} \int_{\D^n} |f(z)|^2 [1-|z|^2]^{L}  d\nu_n(z)< +\infty\bigr\} ,
\]
where $c_{n,L}=\prod_{j=1}^n(L_j-1)$ is chosen so that $\|1\|_{n,L}=1$.

Consider also the normalisation of the monomials $z^\alpha$ in the norm $\|\cdot\|_{n,L}$:
\[
 e_{\alpha}(z)=\prod_{j=1}^n \left(\frac{\Gamma(L_j+\alpha_j)}{\alpha_j!\Gamma(L_j)}\right)^{1/2} z^\alpha\ .
\]
As usual here we denote $z=(z_1,\dots,z_n)$ and use the multi-index notation $\alpha=(\alpha_1,\dots,\alpha_n)$, $\alpha !=\alpha_1!\cdots \alpha_n!$, $|\alpha|=|\alpha_1|+\cdots +|\alpha_n|$ and $z^\alpha=z_1^{\alpha_1}\cdots z_n^{\alpha_n}$.

The \emph{hyperbolic Gaussian analytic function} (GAF) of \emph{intensity} $L=(L_1,\dots, L_n)$ is defined as
\[
 f_L(z)=\sum_\alpha a_\alpha e_{\alpha}(z)=\sum_\alpha a_\alpha  \prod_{j=1}^n \left(\frac{\Gamma(L_j+\alpha_j)}{\alpha_j!\Gamma(L_j)}\right)^{1/2} z^\alpha \qquad z\in\D^n,
\]
where $a_\alpha $ are i.i.d. complex Gaussians of mean 0 and variance 1 (denoted $a_\alpha \sim N_{\C}(0,1)$).


The sum defining $f_L$ can be analytically continued to $L_j>0$, which we assume henceforth.

The probabilistic properties of the hyperbolic GAF are determined by its covariance kernel, which is given by (see \cite{ST1}*{Section 1}, \cite{Stoll}*{p.17-18}):
\begin{align*}
 K_L(z,w)&=\E[f_L(z)\overline{f_L(w)}]= \sum_\alpha  \prod_{j=1}^n \frac{\Gamma(L_j+\alpha_j)}{\alpha_j!\Gamma(L_j)} z^\alpha \bar w^\alpha=
 \prod_{j=1}^n \sum_{\alpha_j=0}^\infty   \frac{\Gamma(L_j+\alpha_j)}{\alpha_j! \Gamma(L_j)} (z_j\bar w_j)^{\alpha_j} \\
 &=
 \prod_{j=1}^n \frac 1{(1-z_j \bar w_j)^{L_j}}=
 \frac 1{[1-z \bar w]^{L}}\ .
\end{align*}

In this paper we follow the scheme of \cite{BMP} and study some statistical properties of the zero variety 
\[
 Z_{f_L}=\left\{z\in \D^n;\ f_L(z)=0\right\}\ .
\]
A main feature of the hyperbolic GAF is that the distribution of $Z_{f_L}$ is invariant under a large subgroup of the holomorphic automorphisms group $\Aut(\D^n)$. 
Consider the group $\mathcal A$ consisting of the automorphisms of the form
\[
 \phi_w^\theta(z)=\left(e^{i\theta_1} \frac{z_1-w_1}{1-\bar w_1 z_1}, \dots, e^{i\theta_n} \frac{z_n-w_n}{1-\bar w_n z_n}\right),\qquad w\in\D^n; \ \theta_j\in [0,2\pi)\ .
\]
We use the notation $\phi_w(z)$ in case $\theta_j=0$, $j=0,\dots,n$.

Any automorphism in $\Aut(\D^n)$ is the composition of an element of $\mathcal A$ with a permutation of the coordinates (see for instance \cite{Sha}*{Theorem 2, pag. 48}).

The transformations
\[
 T_w(f)(z)=\frac{[1-|w|^2]^{L/2}}{[1-\bar w z]^L} f(\phi_w^\theta(z))
\]
are isometries of $B_L(\D^n)$, hence the random zero sets $Z_{f_L}$ and $Z_{f_L\circ\phi_w^\theta}$ have the same distribution. More specifically, the distribution of the (random) integration current
\[
 [Z_{f_L}]=\frac {i}{2\pi}\partial\bar\partial\log |f_L|^2\ ,
\]
is invariant under the subgroup $\mathcal A$. In case $L_1=\cdots=L_n$ then $[Z_{f_L}]$ is invariant under the whole group $\Aut(\D^n)$.

The typical distribution of $Z_{f_L}$ is given by the Edelman-Kostlan formula (see \cite{HKPV}*{Section 2.4} and \cite{Sod}*{Theorem 1}): the so-called \emph{first intensity} of the GAF is 
\[
\E[Z_{f_L}] =\frac{i}{2\pi}\partial \overline{\partial}\log K_L(z,z)= \omega_L (z)\ ,
\]
where $\omega_L$ is the form
\[
\omega_L(z)=\sum_{j=1}^n \frac{L_j}{(1-|z_j|^2)^2} \frac{i}{2\pi} dz_j\wedge d\bar{z_j}\ .
\]
When $L_j=1$, $j=1,\dots,n$, we simply denote $\omega$. 
Notice that
\[
 \omega^n(z)= n!\bigwedge_{j=1}^n \frac{1}{(1-|z_j|^2)^2} \frac{i}{2\pi} dz_j\wedge d\bar{z_j}=\frac{n!}{\pi^n}\frac{dm(z)}{[1-|z|^2]^2}=n!\;  d\nu_n(z)\ ,
\]
which is invariant by $\Aut(\D^n)$ \cite{Stoll}*{p.19}.

In Section 1 we study the fluctuations of linear statistics as the $L_j$ tend to $\infty$. Let $\mathcal D_{(n-1,n-1)}$ denote the space of real-valued, compactly supported, $\mathcal C^2$ forms of bidegree $(n-1,n-1)$. For  $\varphi\in\mathcal D_{(n-1,n-1)}$, consider the integral of $\varphi$ over $Z_{f_L}$
\[
I_L(\varphi)=\int_{Z_{f_L}}\varphi=\int_{\D^n}  \varphi  \wedge [Z_{f_L}]  
\]
and note that the Edelman-Kostlan formula yields
\begin{equation}\label{E}
\E[I_{L}(\varphi)]=\int_{\D^n}\varphi\wedge \omega_L\ .
\end{equation}

\begin{theorem}\label{LS}
 Let $\varphi\in \mathcal D_{(n-1,n-1)}$ and let $D\varphi$ be the function defined by $\frac i{2\pi}\partial\bar\partial\varphi=D\varphi d\nu$. Then
 \[
  \Var [I_L(\varphi)]= \frac{\zeta(n+2)}{\prodj}\left(\int_{\D^n} (D\varphi)^2 d\nu\right)\Bigl[1+\textrm{O}(\sum_{j=1}^n\frac{\log L_j}{L_j})\Bigr].
 \]
\end{theorem}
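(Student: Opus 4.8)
The plan is to reduce the variance of $I_L(\varphi)$ to an integral of the covariance kernel $K_L$, following the Sodin–Tsirelson / Shiffman–Zelditch scheme. First I would use the Green-type identity: since $[Z_{f_L}] = \frac{i}{2\pi}\partial\bar\partial\log|f_L|^2$ and $\varphi$ is compactly supported, an integration by parts gives
\[
 I_L(\varphi) - \E[I_L(\varphi)] = \int_{\D^n}\log\frac{|f_L(z)|^2}{K_L(z,z)}\;\frac{i}{2\pi}\partial\bar\partial\varphi = \int_{\D^n}\log\frac{|f_L(z)|^2}{K_L(z,z)}\,(D\varphi)(z)\,d\nu(z).
\]
The normalised field $g_L(z) = f_L(z)/K_L(z,z)^{1/2}$ is a complex Gaussian with $\E|g_L|^2 = 1$, and the point correlations of $\log|g_L(z)|$ are governed by the normalised covariance
\[
 \rho_L(z,w) = \frac{|K_L(z,w)|^2}{K_L(z,z)K_L(w,w)} = \prod_{j=1}^n \frac{(1-|z_j|^2)^{L_j}(1-|w_j|^2)^{L_j}}{|1-z_j\bar w_j|^{2L_j}}.
\]
Then by Fubini
\[
 \Var[I_L(\varphi)] = \int_{\D^n}\int_{\D^n} \Cov\bigl(\log|g_L(z)|,\log|g_L(w)|\bigr)\,D\varphi(z)\,D\varphi(w)\,d\nu(z)\,d\nu(w),
\]
and the standard fact (as in \cite{ST1}, \cite{BMP}) is that $\Cov(\log|g_L(z)|,\log|g_L(w)|) = \tfrac14 G(\rho_L(z,w))$, where $G(t) = \sum_{k\ge1} t^k/k^2$, so $G(\rho) \sim \rho$ as $\rho\to 0$.

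Next I would localise. Because $\rho_L(z,w)$ decays fast when the hyperbolic distance between $z$ and $w$ is large (each factor behaves, after the automorphism-invariant change of variables, like the one-dimensional kernel raised to the power $L_j$), the double integral concentrates near the diagonal as the $L_j\to\infty$. The natural move is to fix $z$, apply the automorphism $\phi_z$ (under which $d\nu$ and $D\varphi\,d\nu$ transform nicely and $\rho_L$ becomes $\prod_j (1-|u_j|^2)^{L_j}$ in the new variable $u=\phi_z(w)$), and Taylor-expand $D\varphi(w) = D\varphi(z) + \textrm{O}(\text{hyperbolic distance})$. The leading term is $D\varphi(z)^2$ times
\[
 \int_{\D^n}\tfrac14 G\Bigl(\prod_{j=1}^n (1-|u_j|^2)^{L_j}\Bigr)\,d\nu(u),
\]
which factors into a product of one-dimensional integrals only through the $G$; here one expands $G(t)=\sum_k t^k/k^2$ and integrates $\prod_j(1-|u_j|^2)^{kL_j}$ against $d\nu$, giving $\prod_j \frac{1}{kL_j - 1}\sim \frac{1}{k^n\prodj}$ up to lower order, so the sum over $k$ produces $\sum_k k^{-n}\cdot k^{-2} = \zeta(n+2)$ and the prefactor $\frac{1}{\prodj}$. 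The error terms — from the Taylor remainder of $D\varphi$, from the off-diagonal contribution, and from replacing $\frac{1}{kL_j-1}$ by $\frac{1}{kL_j}$ — are collected into the $\textrm{O}(\sum_j \frac{\log L_j}{L_j})$ factor; the $\log L_j$ appears because the hyperbolic-distance weight in the Taylor remainder, integrated against the $k$-th power kernel, contributes a factor like $\sum_k \frac{\log(kL_j)}{k^{n+2}L_j}$.

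The main obstacle I expect is the careful bookkeeping of the error term, specifically showing that the off-diagonal part of the double integral and the Taylor remainder near the diagonal are both genuinely $\textrm{O}(\sum_j \frac{\log L_j}{L_j})$ relative to the main term, uniformly in $\varphi$ up to its fixed $\mathcal C^2$ norm and support. This requires a quantitative decay estimate for $G(\rho_L(z,w))$ in terms of $\prod_j$ of the pseudohyperbolic distances $|\phi_{z_j}(w_j)|$, together with control of how $D\varphi$ and the support of $\varphi$ interact with the invariant measure under $\phi_z$ — essentially the same technical lemmas proved in \cite{BMP} for the ball, adapted to the product structure of the polydisk, where the one-variable estimates can be applied coordinate by coordinate. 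Everything else is the routine computation of the Gaussian covariance and the Beta-type integrals $\int_\D (1-|u|^2)^{s}\,d\nu_1(u)$ indicated above.
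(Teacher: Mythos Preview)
Your proposal is correct and follows essentially the same route as the paper: the bi-potential formula for the variance, the dilogarithm series $G(t)=\sum_{k\ge1}t^k/k^2$ for the covariance of $\log|g_L|$, invariance under $\phi_z$, a diagonal/off-diagonal split, and the termwise Beta integrals producing $\zeta(n+2)/\prod_j L_j$.

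One point worth correcting: your account of where the $\log L_j$ comes from is not quite right. Since $\varphi$ is only assumed $\mathcal C^2$, the function $D\varphi$ is merely continuous, not Lipschitz, so integrating a ``hyperbolic-distance weight'' from a Taylor remainder against the kernel does not by itself produce a small error (it would be the same order as the main term). What the paper actually does --- and what your sketch needs --- is to introduce an explicit near-diagonal region $R_L^\epsilon=\{(z,w):1-\rho^2(z_j,w_j)\ge \epsilon_j^{1/L_j}\}$ with $\epsilon_j=L_j^{-2}$; on this region uniform continuity gives $|D\varphi(z)-D\varphi(w)|\lesssim \max_j(1-\epsilon_j^{1/L_j})\simeq \max_j \log L_j/L_j$, while off it $|\theta_L|^2\le\prod_j\epsilon_j$ kills the integral outright. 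The $\log L_j/L_j$ is thus the size of the truncation radius, not the outcome of a weighted kernel integral.
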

Since $|\varphi\wedge \omega_L|\leq C(\varphi) \sum_{j=0}^n L_j$ (see \eqref{phiomega})  this shows a strong self-averaging of the volume $I_L(\varphi)$ (which increases with the dimension), in the sense that the variance is much smaller than the square of the typical values.

The same computations involved in the proof of this theorem show the asymptotic normality of $I_L(\varphi)$, i.e., that the distribution of
\[
\frac{I_L(\varphi)-\E[I_L(\varphi)]}{\sqrt{\Var [I_L(\varphi)]}}
\]
converges weakly to the (real) standard gaussian (Corollary~\ref{normality}), for each $\varphi$.


In Section 2, we estimate the probability of large deviations for $I_L(\varphi)$.

\begin{theorem}\label{smoothlargedeviations}
For all $\varphi\in \mathcal D_{(n-1,n-1)}$ and $\delta>0$, there exist $c>0$ and $L_j^0(\varphi,\delta,n)$, $j=1,\dots,n$, such that for all $L_j\geq L_j^0$,
\[
\mathbb P\bigl[|I_L(\varphi)-\E(I_L(\varphi))|>\delta \E(I_L(\varphi))\bigr]\leq e^{-c\spj}.
\]
\end{theorem}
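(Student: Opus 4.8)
The plan is to reproduce, with the bookkeeping adapted to the polydisk, the argument of \cite{BMP} for the ball. First I would integrate by parts twice: since $[Z_{f_L}]=\frac i{2\pi}\partial\bar\partial\log|f_L|^2$, since $\omega_L=\frac i{2\pi}\partial\bar\partial\log K_L(z,z)$ by Edelman--Kostlan, and since $\frac i{2\pi}\partial\bar\partial\varphi=D\varphi\,d\nu$, this gives
\[
I_L(\varphi)-\E\bigl[I_L(\varphi)\bigr]=\int_{\D^n}\log\frac{|f_L(z)|^2}{K_L(z,z)}\,D\varphi(z)\,d\nu(z)=\int_{\D^n}\log|g_L(z)|^2\,D\varphi(z)\,d\nu(z),
\]
where $g_L(z):=f_L(z)/\sqrt{K_L(z,z)}$ is, for each fixed $z$, a standard complex Gaussian $N_\C(0,1)$. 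Put $E:=\operatorname{supp}\varphi\subset\subset\D^n$, $A:=\|D\varphi\|_\infty$; by Stokes $\int_{\D^n}D\varphi\,d\nu=0$, so $\E[I_L(\varphi)]=\sum_{j=1}^nL_j\int_{\D^n}\log\frac1{1-|z_j|^2}\,D\varphi\,d\nu$ is a fixed linear form in $L$ which we take to be $\asymp\sumj$ (it is at most a multiple of it by \eqref{phiomega}). Since
\[
\bigl|I_L(\varphi)-\E[I_L(\varphi)]\bigr|\le A\,\nu(E)\,\log^+\!\sup_E|g_L|^2+A\int_E(\log|g_L|^2)^-\,d\nu,
\]
the event $|I_L(\varphi)-\E[I_L(\varphi)]|>\delta\,\E[I_L(\varphi)]$ is contained in $\{\sup_E|g_L|^2>e^{\mu}\}\cup\{\int_E(\log|g_L|^2)^-\,d\nu>\lambda\}$ with $\mu:=\delta\,\E[I_L(\varphi)]/(2A\,\nu(E))$ and $\lambda:=\delta\,\E[I_L(\varphi)]/(2A)$, both $\asymp\delta\sumj$. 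It remains to bound the probability of each event by $e^{-c\,\spj}$.

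The first event is handled by a Gaussian maximal estimate. Using derivative bounds for the normalised kernel $\kappa_L(z,w):=K_L(z,w)\bigl(K_L(z,z)K_L(w,w)\bigr)^{-1/2}$ to replace $\sup_E|g_L|$ by the maximum over a net of $\lesssim(\prodj)^{\,O(1)}$ points, the tail $\mathbb P[|N_\C(0,1)|^2>t]=e^{-t}$ gives $\mathbb P[\sup_E|g_L|^2>t]\lesssim(\prodj)^{O(1)}e^{-ct}$ for $t$ large. As $\mu\asymp\delta\sumj$ is large, taking $t=e^{\mu}$ makes this $\le e^{-c\,\spj}$ for the $L_j$ large, while on the complement $A\,\nu(E)\log^+\!\sup_E|g_L|^2\le A\,\nu(E)\,\mu=\tfrac12\delta\,\E[I_L(\varphi)]$.

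The second, harder, event is a ``hole''-type estimate, and it is here that the decorrelation of $g_L$ is used, exactly as in \cite{BMP}. Two ingredients: (i) for fixed $z$, $-\log|g_L(z)|^2$ has the law of $-\log|a|^2$, $a\sim N_\C(0,1)$, which is sub-exponential, $\E\,e^{s(-\log|a|^2)}=\Gamma(1-s)<\infty$ for $0\le s<1$; and (ii) the normalised kernel factorises,
\[
|\kappa_L(z,w)|^2=\prod_{j=1}^n\bigl(1-\rho(z_j,w_j)^2\bigr)^{L_j},\qquad \rho(z_j,w_j)=\frac{|z_j-w_j|}{|1-\bar w_j z_j|},
\]
so it is exponentially small once $\sum_jL_j\,\rho(z_j,w_j)^2\gtrsim1$. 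Partitioning $E$ into $N\asymp\prodj$ cells, each of $\nu$-measure $\asymp N^{-1}$ and with $j$-th side of pseudohyperbolic length $\asymp L_j^{-1/2}$, the values of $g_L$ on a well-separated subfamily of cells are nearly independent; bounding $\E\exp\bigl(s\int_E(-\log|g_L|^2)\,d\nu\bigr)$ cell by cell and multiplying, $s$ may be taken of order $N$, and Markov's inequality gives
\[
\mathbb P\Bigl[\int_E(\log|g_L|^2)^-\,d\nu>\lambda\Bigr]\le\exp\bigl(O(N\log\lambda)-c\,N\lambda\bigr),
\]
which for $\lambda\asymp\delta\sumj$ and the $L_j$ large is $\le e^{-c'N\lambda}\le e^{-c''\,\spj}$. (The same event can be read as a hole probability: it forces $|f_L|$ to be atypically small on a macroscopic part of $E$, hence --- inspecting $f_L$ at an $L_j^{-1/2}$-separated set of $\asymp\prodj$ points, nearly independent Gaussians --- forces each of them below $e^{-c\sumj}\sqrt{K_L(\cdot,\cdot)}$, of probability $\le(e^{-c\sumj})^{2\prodj}=e^{-2c\,\spj}$.)

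Combining the two estimates and summing over a covering of $\operatorname{supp}\varphi$ by finitely many polydisks $\subset\subset\D^n$ yields the theorem. I expect the genuinely delicate step to be the second one: one must make the off-diagonal decay of $\kappa_L$ and the oscillation of $g_L$ across a cell quantitative enough that the cell-wise exponential-moment bound survives the product over $\asymp\prodj$ cells, and one must absorb the contribution of the $\asymp\sumj$ cells that meet $Z_{f_L}$ (negligible once $\sumj\ll\prodj$, i.e.\ for the $L_j$ large). In the polydisk this is more transparent than in the ball because $\kappa_L$ is a product over the coordinates; otherwise the details are those of \cite{BMP}.
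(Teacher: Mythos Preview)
Your opening reduction via Stokes matches the paper's (its Lemma~\ref{mainlemmaSLD}), and your treatment of the upper tail $\sup_E|\hat f_L|^2$ is adequate and close in spirit to the paper's Lemma~\ref{controlmax}(b). The gap is in your control of $\int_E(\log|\hat f_L|^2)^-\,d\nu$. At pseudohyperbolic separation $\asymp L_j^{-1/2}$ the normalised covariance is $|\kappa_L|^2=\prod_j(1-\rho_j^2)^{L_j}\asymp e^{-cn}$, bounded away from $1$ but \emph{not} small; the values of $g_L$ on your $\asymp\prodj$ grid are therefore far from independent, and there is no mechanism by which the cellwise exponential moments multiply to yield $\mathbb P\le e^{-cN\lambda}$. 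Thinning the grid until correlations are small enough to survive a product over $N$ factors costs logarithmic factors in $N$ that cannot be absorbed into the constant. The parenthetical hole reading also fails: the event $\int_E(\log|g_L|^2)^-\,d\nu>\lambda$ does not force $|g_L|\le e^{-c\sumj}$ at $\asymp\prodj$ separated points, since the integral can be carried by neighbourhoods of the (typically $\asymp\sumj$) zeros, over which point-evaluation gives no control.

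The paper avoids decorrelation entirely by exploiting plurisubharmonicity. An approximate sub-mean-value inequality on a pseudohyperbolic polydisk $E=E(z_0,s)$ (Lemma~\ref{invlemma}) gives $\log|\hat f_L(\lambda)|^2\le\nu(E)^{-1}\int_E\log|\hat f_L|^2\,d\nu+\sum_jL_j\,s_j^2/(1-s_j^2)$ for every $\lambda\in E$. Paired with a \emph{lower} bound on $\max_E\log|\hat f_L|^2$ --- Lemma~\ref{controlmax}(a), proved by showing via Cauchy's inequalities that a small maximum forces $\asymp\prodj$ Taylor coefficients $a_\alpha$ to satisfy $|a_\alpha|\le e^{-c\sumj}$, an event of probability $\le e^{-c\spj}$ --- this converts $\int_E\log^-$ into $\int_E\log^++O(\nu(E)\sumj)$, and $\log^+$ is then bounded by the max (Lemma~\ref{controlmax}(b)). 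The covering of $\operatorname{supp}\varphi$ is by polydisks of \emph{fixed} radius depending on $\delta$ but not on $L$.
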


\begin{remarks*}
 1. In case $n=1$ the result coincides with \cite{Bu-T13}*{Theorem 5.7} (see also \cite{BMP}*{Theorem 2}). Also, fixing $L_j$, $j\neq i$, and letting $L_i\to\infty$ we see that the exponent is of order $L_i^2$, which corresponds again to the one-dimensional case (for the coordinate $z_i$).

 2. When $L_j=L$ for all $j$, the exponent $(\sum_{j=0}^n L_j)(\prod_{j=0}^n L_j)$ is of order $L^{n+1}$, as  in the ball (see \cite{BMP}*{Theorem 2}) .
  
\end{remarks*}

Following the scheme of \cite{SZZ}*{pag.1994} we deduce a corollary that implies the upper bound in the hole theorem (Theorem~\ref{holethm} below). 

For a smooth compactly supported function $\psi$ in $\D^n$ consider the $(n-1,n-1)$-form
\[
 \varphi=\psi \frac{\omega^{n-1}}{(n-1)!}\ .
\]
In this case
\[
 \varphi (z) \wedge \omega_L (z) =\bigl(\sumj\bigr) \psi(z) d\nu(z)\ .
\]

Define
\[
 I_L(\psi)=\int_{Z_{f_L}}\psi\frac{\omega^{n-1}}{(n-1)!} =\int_{\mathbb D^n} \psi \wedge \frac{\omega^{n-1}}{(n-1)!} \wedge [Z_{f_L}]
\]
and note that \eqref{E} gives here
\[
 \E[I_L(\psi)]=\bigl(\sumj\bigr) \int_{\D^n} \psi\, d\nu\ .
\]

In particular, and for an open set $U\subset\D^n$ let $\chi_U$ denote its characteristic function and let  $I_L(U)=I_L(\chi_U)$. Then $\E[I_L(U)]=\bigl(\sum_{j=0}^n L_j\bigr) \nu(U)$. 

\begin{corollary}\label{conseqSLD}
Suppose that $U$ is an open set contained in a compact subset of $\D^n$. For all $\delta>0$ there exist $c>0$ and $L_j^0$ such that for all $L_j\geq L_j^0$,
\[
\mathbb P\Bigl[\Bigl|\frac{1}{\sum_{j=0}^n L_j}I_L(U)-\nu(U)\Bigr|>\delta\Bigr]\leq e^{-c\spj}.
\]
\end{corollary}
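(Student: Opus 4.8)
The plan is to deduce this from Theorem~\ref{smoothlargedeviations} by a monotone approximation, exactly as in \cite{SZZ}*{pag.1994} and \cite{BMP}. Since $U$ is contained in a compact subset $K$ of $\D^n$, for each $\eta>0$ I can choose $\psi_-,\psi_+\in\mathcal C_c^\infty(\D^n)$, all supported in a fixed compact neighbourhood of $K$, with $0\le\psi_-\le\chi_U\le\psi_+\le 1$ and $\int_{\D^n}(\psi_+-\psi_-)\,d\nu<\eta$; such functions are obtained by mollifying $\chi_U$ and inserting Urysohn-type cut-offs. Because $\frac{\omega^{n-1}}{(n-1)!}\wedge[Z_{f_L}]$ is a positive measure, the functional $\psi\mapsto I_L(\psi)$ is monotone, so almost surely
\[
 I_L(\psi_-)\le I_L(U)\le I_L(\psi_+),
\]
while $\E[I_L(\psi_\pm)]=\bigl(\sumj\bigr)\int_{\D^n}\psi_\pm\,d\nu$ lies within $\bigl(\sumj\bigr)\eta$ of $\bigl(\sumj\bigr)\nu(U)$.

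Next I would fix $\delta>0$ and take $\eta<\delta/2$ (and also $\eta<\nu(U)$ in the range $\nu(U)>\delta$; recall $I_L(U)\ge 0$, so the event $\frac{1}{\sumj}I_L(U)<\nu(U)-\delta$ is empty when $\delta\ge\nu(U)$, and there is nothing to prove on that side). On the event $\frac{1}{\sumj}I_L(U)>\nu(U)+\delta$ one has, using $I_L(U)\le I_L(\psi_+)$ and $\nu(U)\ge\int_{\D^n}\psi_+\,d\nu-\eta$,
\[
 I_L(\psi_+)>\bigl(\sumj\bigr)\Bigl(\int_{\D^n}\psi_+\,d\nu+\delta-\eta\Bigr)=\E[I_L(\psi_+)]+\bigl(\sumj\bigr)(\delta-\eta),
\]
that is, $\bigl|I_L(\psi_+)-\E[I_L(\psi_+)]\bigr|>\delta_+\,\E[I_L(\psi_+)]$ with $\delta_+=(\delta-\eta)/\int_{\D^n}\psi_+\,d\nu>0$ (note $\int_{\D^n}\psi_+\,d\nu\ge\nu(U)>0$ since $U$ is a nonempty open set). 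Symmetrically, on $\frac{1}{\sumj}I_L(U)<\nu(U)-\delta$ one gets $\bigl|I_L(\psi_-)-\E[I_L(\psi_-)]\bigr|>\delta_-\,\E[I_L(\psi_-)]$ with $\delta_-=(\delta-\eta)/\int_{\D^n}\psi_-\,d\nu>0$, using $\int_{\D^n}\psi_-\,d\nu>\nu(U)-\eta>0$.

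Now I would apply Theorem~\ref{smoothlargedeviations} to $\psi_+$ with tolerance $\delta_+$ and to $\psi_-$ with tolerance $\delta_-$: there are $c_\pm>0$ and thresholds $L_j^0(\psi_\pm,\delta_\pm,n)$ so that for $L_j$ above them the two probabilities above are each at most $e^{-c_\pm\spj}$. A union bound then gives
\[
 \mathbb P\Bigl[\Bigl|\frac{1}{\sumj}I_L(U)-\nu(U)\Bigr|>\delta\Bigr]\le e^{-c_+\spj}+e^{-c_-\spj}\le e^{-c\,\spj}
\]
for a suitable $c>0$ (take $c$ slightly below $\min(c_+,c_-)$, absorbing the factor $2$ once the $L_j$ are large, and let $L_j^0$ be the maximum of the two thresholds). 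The only point requiring a little care is the passage from the additive deviation $\bigl(\sumj\bigr)(\delta-\eta)$ to the relative deviation $\delta_\pm\,\E[I_L(\psi_\pm)]$ demanded by Theorem~\ref{smoothlargedeviations}, which forces one to keep $\int_{\D^n}\psi_\pm\,d\nu$ bounded away from $0$; outside the trivial ranges of $\nu(U)$ this is automatic, and the dependence of $c$ and $L_j^0$ on $\delta$, $U$ and $n$ is permitted by the statement.
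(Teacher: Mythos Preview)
Your proof is correct and follows exactly the approach the paper has in mind: the paper omits the argument and refers to \cite{BMP}*{Corollary 5}, where the same monotone sandwich $\psi_-\le\chi_U\le\psi_+$ by smooth compactly supported functions is used to reduce to Theorem~\ref{smoothlargedeviations} (this is also the scheme of \cite{SZZ}*{pag.1994} you cite). The passage from an additive to a relative deviation and the handling of the case $\nu(U)\le\delta$ are correctly dealt with.
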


The proof of this is as in the ball (see \cite{BMP}*{Corollary 5}), so we skip it.

In the last Section we study the probability that $Z_{f_L}$ has a pseudo-hyperbolic hole of polyradius $r$. Given $w\in \D^n$ and $r=(r_1,\dots,r_n)$, $r_j \in (0,1)$ consider the pseudo-hyperbolic polydisk
\[
 E(w,r)=\Bigl\{z\in\D^n : \bigl|\frac{z_j-w_j}{1-z_j\bar w_j}\bigr|<r_j, j=1,\dots,n\Bigr\}\ .
\]
By the invariance of the distribution of the zero variety under the automorphisms $\mathcal A$, the probability that $Z_{f_L}$ does not intersect $E(w,r)$ is the same as the probability that $Z_{f_L}\cap E(0,r)=\emptyset$.

\begin{theorem}\label{holethm}
Let $r=(r_1,\dots,r_n)$, $r_j\in (0,1)$, be fixed. There exist $C_1=C_1(n,r)>0$, $C_2=C_2(n,r)>0$ and $L_j^0$ such that for all
$L_j\geq L_j^0$,
\[
e^{-C_{1}\spj} \leq \mathbb P\bigl[Z_{f_L}\cap E(0,r)=\emptyset\bigr]\leq e^{-C_{2}\spj}.
\]
\end{theorem}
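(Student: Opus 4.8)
The plan is to prove the two bounds separately, both following the Sodin--Tsirelson / Shiffman--Zelditch strategy already used in \cite{BMP} for the ball.

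\emph{Upper bound.} A hole event forces $\log|f_L|$ to be very negative on $E(0,r)$. The cleanest route is to use Corollary~\ref{conseqSLD}: choose a slightly larger polydisk $E(0,r')\Subset\D^n$ with $r'_j>r_j$ and observe that if $Z_{f_L}\cap E(0,r)=\emptyset$, then the counting current $[Z_{f_L}]$ puts all of its mass in $E(0,r')$ outside the inner region. Equivalently, taking $U=E(0,\rho)$ for an intermediate polyradius and $\psi$ supported in $E(0,r')$ but $\equiv 1$ on $E(0,\rho)$, the identity $\E[I_L(\psi)]=(\sumj)\int\psi\,d\nu$ shows that the conditional expectation of $I_L(\psi)$ is a fixed positive fraction $\nu(E(0,\rho))>0$ of $\sumj$, while on the hole event $I_L(\psi)=\int_{Z_{f_L}\cap\operatorname{supp}\psi}\psi\,\omega^{n-1}/(n-1)!$ can be made to deviate from this by more than $\delta(\sumj)$ because the zeros avoid $E(0,r)\supset E(0,\rho)$ and $\psi$ is small near $\partial E(0,r')$ — so a large deviation has occurred, and Corollary~\ref{conseqSLD} gives $\mathbb P\le e^{-c(\sumj)(\prodj)}$. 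This yields the desired $e^{-C_2\spj}$.

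\emph{Lower bound.} Here I would follow Sodin--Tsirelson: the hole event contains the event that the Gaussian coefficient vector $(a_\alpha)$ is small in an appropriate weighted sense. Concretely, write $f_L=\sum_\alpha a_\alpha e_\alpha$ and note that on $E(0,r)$ one has $|e_\alpha(z)|\le \bigl(\prod_j \Gamma(L_j+\alpha_j)/(\alpha_j!\Gamma(L_j))\bigr)^{1/2} r^\alpha$. If $|a_\alpha|\le \eps_\alpha$ for a suitable summable sequence (e.g. $\eps_\alpha$ decaying geometrically in $|\alpha|$ relative to $1/r$), then $\sup_{E(0,r)}|f_L|<\tfrac12$, while requiring in addition that $|a_0|>1$ (or $a_0$ in a fixed annulus) and that $f_L$ has no zeros for another reason — actually it is cleaner to instead demand $\sup_{E(0,r')}|f_L-a_0|<|a_0|$ on a slightly larger polydisk so that, by Rouch\'e/Hurwitz applied coordinatewise (or the minimum principle for $|f_L|$), $f_L$ cannot vanish in $E(0,r)$. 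The probability that $|a_0-1|<\tfrac14$ and $|a_\alpha|\le\eps_\alpha$ for all $\alpha\ne 0$ is $\prod_\alpha \mathbb P[|a_\alpha|\le\eps_\alpha]$, and taking logarithms this is $-\sum_{\alpha\ne 0}\log(1/\mathbb P[|a_\alpha|\le\eps_\alpha])\gtrsim -\sum_\alpha \log(1/\eps_\alpha^2)$. The point of the estimate is that the dominant growth of $\eps_\alpha^{-2}$ comes from the factor $\prod_j \Gamma(L_j+\alpha_j)/(\alpha_j!\Gamma(L_j))\sim \prod_j \binom{\alpha_j+L_j-1}{\alpha_j}$; summing $\log$ of this against the geometric weight over the lattice $\alpha\in\N^n$ produces exactly an expression of order $\spj=(\sumj)(\prodj)$, giving $\mathbb P\ge e^{-C_1\spj}$.

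\emph{Main obstacle.} The delicate point is the bookkeeping in the lower bound: one must choose the thresholds $\eps_\alpha$ so that simultaneously (i) $\sum_\alpha \eps_\alpha \sup_{E(0,r')}|e_\alpha|<\tfrac14$, forcing $\eps_\alpha$ to beat the growth of $\|e_\alpha\|_{\sup,E(0,r')}\asymp \prod_j\bigl(\binom{\alpha_j+L_j-1}{\alpha_j}\bigr)^{1/2}(r'_j)^{\alpha_j}$, and (ii) the resulting $\sum_\alpha\log(1/\eps_\alpha^2)$ is not merely finite but of the correct order $\spj$ and no larger. Matching the multi-dimensional $\Gamma$-quotient asymptotics (via $\log\Gamma(L_j+\alpha_j)-\log\Gamma(L_j)-\log\alpha_j!$ and Stirling) against the geometric series in each coordinate, and checking that the cross terms over the $n$ coordinates assemble into the product--times--sum form rather than just the product, is the real content; everything else is as in \cite{BMP}. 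For the upper bound the only subtlety is choosing $\rho<r<r'$ and $\psi$ so that the guaranteed deviation exceeds $\delta\sum_j L_j$ uniformly, which is routine once $U=E(0,\rho)$ and $\operatorname{supp}\psi\subset E(0,r')$ are fixed with $\nu(E(0,r')\setminus E(0,\rho))$ and $\nu(E(0,\rho))$ both positive.
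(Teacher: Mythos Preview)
Your outline follows the right strategy, but each half has a genuine gap.

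\textbf{Upper bound.} As you set it up, with $\rho<r<r'$ and $\psi$ supported in $E(0,r')$, the argument does not close: on the hole event the zeros avoid $E(0,r)$, but nothing prevents them from piling up in the annulus $E(0,r')\setminus E(0,r)$, where $\psi$ need not be small. So $I_L(\psi)$ could be as large as (or larger than) its mean, and no deviation is forced. The paper avoids this entirely by applying Corollary~\ref{conseqSLD} with $U=E(0,r)$ itself: on the hole event $I_L(U)=0$, which is a deviation of size $\nu(U)\sumj$ from the mean, and the corollary gives the bound immediately. (Equivalently, you could take a smooth $\psi$ with support \emph{inside} $E(0,r)$ and use Theorem~\ref{smoothlargedeviations}; the point is that the test function must vanish outside the hole.)

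\textbf{Lower bound.} A single scheme with $\eps_\alpha$ decaying geometrically in $|\alpha|$ over all of $\N^n$ gives probability zero: $\sum_{\alpha\in\N^n}\log(1/\eps_\alpha^2)$ diverges, so $\prod_\alpha\mathbb P[|a_\alpha|\le\eps_\alpha]=0$. The ``bookkeeping'' obstacle you flag is not just delicate, it is fatal without an additional idea. The paper's fix---and this is the point you are missing---is to split the nonzero indices into a \emph{middle block} $J_1=\{\alpha\neq 0:\alpha_j\le C_jL_j\ \forall j\}$ of size $\simeq\prodj$ and a \emph{tail} $J_2$. On the tail one imposes only the mild condition $|a_\alpha|\le |\alpha|_\infty/n$, which already makes the tail sum $\le 1/4$ on $E(0,r)$ (because $\|e_\alpha\|_{\sup,E(0,r)}$ decays geometrically once some $\alpha_j\gg L_j$) and holds with probability at least $1/2$. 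Only on the finite middle block does one require the coefficients to be exponentially small, namely $|a_\alpha|^2<[1-r^2]^L/(4\prod_j C_jL_j)$; combined with $|a_0|\ge 1$ this forces $|f_L|\ge 1-\tfrac12-\tfrac14>0$ on $E(0,r)$. The probability of the middle event is
\[
\Bigl(\frac{[1-r^2]^L}{c\prod_j L_j}\Bigr)^{\prod_j C_jL_j}\ge \exp\Bigl[-c\bigl(\prodj\bigr)\bigl(\sumj\log\tfrac1{1-r_j^2}+\sum_j\log L_j\bigr)\Bigr]\ge e^{-C_1\spj},
\]
which is exactly where the product-times-sum exponent comes from: $\prodj$ counts the indices in the block, and $\sumj$ comes from $\log[1-r^2]^{-L}$. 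No Rouch\'e or Hurwitz is needed; $|f_L|>0$ on $E(0,r)$ is the hole event directly.
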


A final word about notation. By $A\lesssim B$ we mean that there exists $C>0$ independent of the relevant variables of $A$ and $B$ for which $A\leq CB$. Then $A\simeq B$ means that $A\lesssim B$ and $B\lesssim A$.

\section{Linear statistics. Proof of Theorem~\ref{LS}}

The proof is as in \cite{BMP}*{Section 1} so we keep it short. The starting point is the following bi-potential expression of the variance:
\begin{align}\label{bipotential}
 \Var[I_L(\varphi)]&=
 \int_{\D^n}\int_{\D^n}  \rho_L(z,w) \frac i{2\pi}\partial\bar\partial\varphi(z) \frac i{2\pi}\partial\bar\partial\varphi (w)\\
 &=\int_{\D^n}\int_{\D^n}  \rho_L(z,w) D\varphi(z) D\varphi(w) d\nu(z) d\nu(w) \ , \nonumber
\end{align}
where $\rho_L(z,w)=4\Cov (\log|\hat f(z)|,\log|\hat f(w)|)$. By \cite{HKPV}*{Lemma 3.5.2}
\begin{align*}
  \rho_L(z,w)=\sum_{m=1}^\infty\frac{|\theta_L(z,w)|^{2m}}{m^2}\ ,
 \end{align*}
 where
\begin{equation}\label{norm-kernel}
\theta_L(z,w):=\frac{K_L(z,w)}{\sqrt{K_L(z,z)} \sqrt{K_L(w,w)}}=\frac{[1-|z|^2]^{L/2} [1-|w|^2]^{L/2}}{[1-\bar z w]^L}
\end{equation}
is the normalised covariance kernel of $f_L$.

For $\zeta,\xi\in\D$ let
\[
 \rho(\zeta,\xi)=\Bigl|\frac{\zeta-\xi}{1-\bar \zeta\xi}\Bigr|\ .
\]
Notice  that
\[
 |\theta_L(z,w)|^2=\prod_{j=1}^n \bigl(1-\bigl|\frac{z_j-w_j}{1-z_j\bar w_j}\bigr|^2\bigr)^{L_j}
 =\prod_{j=1}^n \bigl(1-\rho^2(z_j,w_j)\bigr)^{L_j}\ ,
\]
hence
\[
 |\theta_L(z,w)|^2=|\theta_L(\phi_w(z), 0)|^2
\]

We see next that only the near diagonal part of the double integral \eqref{bipotential} is relevant.
Let $\varepsilon_{j}=1/L_j^{2}$ and define
\[
 R_L^\epsilon=\Bigl\{(z,w)\in \D^n\times \D^n : 1-\rho^2(z_j,w_j)\geq \epsilon_j^{1/L_j}, j=1,\dots, n\Bigr\}\ .
\]

Split the integral into three parts
\begin{align}
  \Var[I_L(\varphi)]&=\int\limits_{(\D^n\times \D^n)\setminus R_L^\epsilon} \rho_L(z,w) D\varphi(z) D\varphi(w) d\nu(z) d\nu(w) \tag{I1} \\
  &\quad + \int_{R_L^\epsilon} \rho_L(z,w) (D\varphi(z)-D\varphi(w)) D\varphi(w) d\nu(z) d\nu(w) \tag{I2}\\
  &\quad + \int_{R_L^\epsilon} \rho_L(z,w) ( D\varphi(w))^2 d\nu(z) d\nu(w)\tag{I3}\ .
\end{align}

The bound for the first integral is a consequence of the estimate
\begin{equation}\label{est-rho}
 |\theta_L(z,w)|^2 \leq \rho_L(z,w)\leq 2 |\theta_L(z,w)|^2\ ,
\end{equation}
which follows immediately from $x\leq\sum_{m=1}^\infty\ x^m/m^2 \leq 2x$, $ x\in[0,1]$.

Then, by the definition of $R_L^\epsilon$,
\[
 |\textrm{I1}|\leq 2 \bigl(\prod_{j=1}^n \varepsilon_{j}\bigr) \int_{(\D^n\times \D^n)\setminus R_L^\epsilon} |D\varphi(z) D\varphi(w)| d\nu(z) d\nu(w)
 \leq  \frac 2{\prod_{j=1}^n L_j^2} \left(\int_{\D^n} |D\varphi(z)|\, d\nu(z)\right)^2\ .
\]

By the uniform continuity of $i\partial\bar\partial\varphi$ there exists a regular function $\eta(x_1,\dots,x_n)$ with $ \eta(1,\dots,1)=0$ and such that for all $z,w\in\D^n$,
\[
 |D\varphi(z)-D\varphi(w)|\leq \eta\bigl(1-\rho^2(z_1, w_1),\dots, 1-\rho^2(z_n, w_n)\bigr)\ .
\]
Since $\eta(x)\lesssim \|1-x\|$ for $x=(x_1,\dots, x_n)$, $x_j$ near 1, we see that for $(z,w)\in R_L^\epsilon$
\begin{align*}
|D\varphi(z)-D\varphi(w)|&\lesssim \|(\rho^2(z_1, w_1),\dots, \rho^2(z_n, w_n))\|
 \lesssim 
 \max_j\bigl|1-\varepsilon_j^{1/L_j}\bigr|\simeq\max_j \frac{\log L_j}{L_j}\ ,
\end{align*}

By the invariance by automorphisms of the measure $d\nu$, we get
\begin{align*}
 |\textrm{I2}|&\lesssim \Bigl( \max_j \frac{\log L_j}{L_j}\Bigr) \int\limits_{R_L^\epsilon\cap(supp\; \varphi\times supp\; \varphi)} [1-|\phi_z(w)|^2]^L\,  d\nu(z) d\nu(w)\\
&\lesssim \; \Bigl( \max_j \frac{\log L_j}{L_j}\Bigr)
\int_{supp\;\varphi}\Bigl(\int_{\{z: 1-|z_j|^2 \geq\varepsilon_{j}^{1/L_j}\ \forall j\}} [1-|z|^2]^L d\nu(z)\Bigr) d\nu(w)\\
&\lesssim \; \Bigl( \max_j \frac{\log L_j}{L_j}\Bigr) \int_{\{z: 1-|z_j|^2 \geq\varepsilon_{j}^{1/L_j}\ \forall j\}} [1-|z|^2]^L d\nu(z)\ .
\end{align*}

On the other hand, using again the invariance of $d\nu$, we see that
\begin{align*}
 \textrm{I3}&=
 \left(\int_{\D^n} (D\varphi(w))^2 d\nu(w)\right) \int_{\{z: 1-|z_j|^2 \geq\varepsilon_{j}^{1/L_j}\ \forall j\}} \rho_L(z,0) d\nu(z)\ .
\end{align*}
By \eqref{est-rho} we have thus $\textrm{I2}=\textrm{o}(\textrm{I3})$ and therefore
\begin{equation}\label{ls}
 \Var[I_L(\varphi)]=\textrm{I3} \bigl(1+\textrm{O}(\max_j \frac{\log L_j}{L_j})\bigr) \ .
\end{equation}

It remains to compute the second factor in I3:
\begin{align*}
 J:&=\int_{\{z: 1-|z_j|^2 \geq\varepsilon_{j}^{1/L_j}\ \forall j\}} \rho_L(z,0) d\nu(z)=\sum_{m=1}^\infty\frac 1{m^2} \int_{\{z: 1-|z_j|^2 \geq\varepsilon_{j}^{1/L_j}\ \forall j\}} [1-|z|^2]^{mL} d\nu(z)\\
 &=\sum_{m=1}^\infty\frac 1{m^2} \prod_{j=1}^n \int_{\{z_j: 1-|z_j|^2 \geq\varepsilon_{j}^{1/L_j}\}} (1-|z_j|^2)^{mL_j-2}\ \frac{dm(z_j)}{\pi}\ .
\end{align*}

Using the computations in \cite{BMP}*{Section 1} for $n=1$ we see that
\begin{align*}
 \int_{\{z_j: (1-|z_j|^2)^{L_j} \geq\varepsilon_{j}\}} (1-|z_j|^2)^{mL_j-2}\ \frac{dm(z)}{\pi}&=\int_0^{(1-\epsilon_j^{1/L_j})^{1/2}} (1-r^2)^{mL_j-2} 2r\ dr\\
 &=\int_{\epsilon_j^{1/L_j}}^1 s^{mL_j-2} ds=\frac 1{mL_j}\bigl[1+\textrm O(\frac 1{mL_j})\bigr]
\end{align*}
Therefore
\[
 J=\sum_{m=1}^\infty\frac 1{m^2} \prod_{j=1}^n \frac{1}{m L_j}\bigl[1+\textrm O(\frac 1{mL_j})\bigr]
 =\frac 1{\prodj}\; \zeta(n+2)\; \bigl[1+\textrm O(\max_j \frac 1{L_j})\bigr]
\]

This and \eqref{ls} give the stated result.

As an immediate consequence of the results of M. Sodin and B. Tsirelson and the previous computations we obtain the asymptotic normality of $I_L(\varphi)$. The proof is as in \cite{BMP}*{Corollary 5}, so we skip it.

\begin{corollary}\label{normality} As $L\to\infty$ the distribution of the normalised random variable
 \[
\frac{I_L(\varphi)-\E[I_L(\varphi)]}{\sqrt{\Var(I_L(\varphi))}}
\]
tends weakly to the standard (real) gaussian, for each $\varphi$.
\end{corollary}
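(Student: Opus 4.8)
The plan is to realise $I_L(\varphi)$ as a linear functional of the Gaussian field $\log|f_L|$ and then to run the cumulant (method of moments) scheme of Sodin and Tsirelson, the only quantitative input being the near-diagonal estimate for $\int_{\D^n}|\theta_L(\cdot,0)|^2\,d\nu$ already used in the proof of Theorem~\ref{LS}. First, since $\varphi$ is compactly supported, $[Z_{f_L}]=\frac i{2\pi}\partial\bar\partial\log|f_L|^2$ and $\log|f_L|\in L^1_{\mathrm{loc}}(\D^n)$, I would integrate by parts in the sense of currents, using the definition $\frac i{2\pi}\partial\bar\partial\varphi=D\varphi\,d\nu$, to get
\[
I_L(\varphi)=\int_{\D^n}\log|f_L|^2\,\tfrac i{2\pi}\partial\bar\partial\varphi=2\int_{\D^n}\log|f_L(z)|\,D\varphi(z)\,d\nu(z).
\]
Putting $g_L(z)=f_L(z)/\sqrt{K_L(z,z)}$, so that $g_L(z)\sim N_{\C}(0,1)$ for every $z$, $|\E[g_L(z)\overline{g_L(w)}]|=|\theta_L(z,w)|$ by \eqref{norm-kernel}, and $\log|f_L|-\E\log|f_L|=\log|g_L|-\E\log|g_L|$ (the difference $\tfrac12\log K_L(z,z)$ being deterministic), the centred variable is
\[
\xi_L:=I_L(\varphi)-\E[I_L(\varphi)]=2\int_{\D^n}\bigl(\log|g_L(z)|-\E\log|g_L(z)|\bigr)D\varphi(z)\,d\nu(z).
\]
Since $\log|g_L(z)|=\tfrac12\log|g_L(z)|^2$ with $|g_L(z)|^2$ an $\mathrm{Exp}(1)$ variable, it has moments of all orders, and $D\varphi$ is bounded with compact support, so $\xi_L$ has moments of all orders. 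Hence it suffices to prove that $\operatorname{cum}_p(\xi_L)/(\Var\xi_L)^{p/2}\to0$ for each $p\ge3$; then every moment of $\xi_L/\sqrt{\Var\xi_L}$ converges to the corresponding moment of $N(0,1)$, and, the Gaussian law being determined by its moments, this gives the asserted weak convergence. We may assume $\int_{\D^n}(D\varphi)^2\,d\nu\neq0$, otherwise $\xi_L\equiv0$ and there is nothing to prove; then Theorem~\ref{LS} yields $\Var\xi_L\simeq(\prodj)^{-1}$.

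The cumulants would be handled as follows. By multilinearity of joint cumulants and Fubini,
\[
\operatorname{cum}_p(\xi_L)=2^p\int_{(\D^n)^p}\Gamma_p(z_1,\dots,z_p)\prod_{i=1}^p D\varphi(z_i)\,d\nu(z_i),
\]
where $\Gamma_p$ is the $p$-th joint cumulant of $\log|g_L(z_1)|,\dots,\log|g_L(z_p)|$. Expanding each $\log|g_L(z_i)|$ in its rotation-invariant Wiener chaos (complex Hermite) series and applying the diagram formula for cumulants of nonlinear functionals of a Gaussian field, exactly as Sodin and Tsirelson do for $n=1$ and Shiffman and Zelditch in the K\"ahler setting, one bounds $|\Gamma_p(z_1,\dots,z_p)|$ by a constant $C_p$ times a sum, over connected graphs $G$ on $\{1,\dots,p\}$, of products $\prod_{\{i,j\}\in E(G)}|\theta_L(z_i,z_j)|^{k_{ij}}$ with all $k_{ij}\ge1$; the summation over the chaos multiplicities converges thanks to $|\theta_L|\le1$ and the bound $\sum_{m\ge1}x^m/m^2\le2x$ on $[0,1]$, already invoked for \eqref{est-rho}. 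Since $|\theta_L|\le1$ this is at most $C_p\sum_G\prod_{\{i,j\}\in E(G)}|\theta_L(z_i,z_j)|$, and passing to a spanning tree of each $G$ only enlarges the product. I would then integrate out the non-root vertices of each tree from the leaves inward: using the $\Aut$-invariance of $d\nu$ and the identity $|\theta_L(z,w)|=|\theta_L(\phi_w(z),0)|$,
\[
\int_{\D^n}|\theta_L(z,w)|\,d\nu(z)=\int_{\D^n}[1-|z|^2]^{L/2}\,d\nu(z)=\prod_{j=1}^n\frac1{L_j/2-1}\simeq\frac1{\prodj},
\]
a computation of the same type as the one for $J$ in the proof of Theorem~\ref{LS}. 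Each of the $p-1$ non-root integrations then contributes a factor $\lesssim\|D\varphi\|_\infty(\prodj)^{-1}$, and the remaining integration of the root over $\operatorname{supp}\varphi$ contributes $\mathrm{O}(1)$; as the number of connected graphs on $p$ vertices depends only on $p$, we get
\[
|\operatorname{cum}_p(\xi_L)|\lesssim_{p,\varphi}\frac1{(\prodj)^{p-1}}.
\]

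Combining this with $\Var\xi_L\simeq(\prodj)^{-1}$, for every $p\ge3$
\[
\frac{|\operatorname{cum}_p(\xi_L)|}{(\Var\xi_L)^{p/2}}\lesssim_{p,\varphi}(\prodj)^{-(p-1)+p/2}=(\prodj)^{-(p/2-1)},
\]
which tends to $0$ as $L\to\infty$ since $\prodj\to\infty$ and $p/2-1>0$. Thus the first two cumulants of $\xi_L/\sqrt{\Var\xi_L}$ are $0$ and $1$, and those of order $\ge3$ tend to $0$; hence all its moments converge to the Gaussian ones and $\xi_L/\sqrt{\Var\xi_L}\Rightarrow N(0,1)$, as desired. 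This is the scheme of \cite{BMP}.

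I expect the one genuinely non-routine step to be the diagram bound for $\Gamma_p$, that is, estimating the $p$-th joint cumulant of the non-polynomial functionals $\log|g_L(z_i)|$ by a sum of connected-graph integrals of powers of $|\theta_L|$. But that mechanism is exactly the one worked out by Sodin and Tsirelson (and reused by Shiffman and Zelditch), and it carries over to the polydisk verbatim because it uses nothing beyond $|\theta_L|\le1$; all the dimension- and geometry-dependent content enters only through the estimate $\int_{\D^n}|\theta_L(\cdot,0)|^2\,d\nu\simeq(\prodj)^{-1}$ already established in the proof of Theorem~\ref{LS}. Everything else is bookkeeping.
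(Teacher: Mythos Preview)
Your proposal is correct and is precisely the Sodin--Tsirelson cumulant scheme that the paper invokes (the paper gives no proof, referring to \cite{BMP}*{Corollary 5} and the results of Sodin and Tsirelson together with the computations of Theorem~\ref{LS}). You have spelled out exactly what that reference entails: the bipotential representation, the diagram/spanning-tree bound on $\operatorname{cum}_p$, and the near-diagonal decay $\int_{\D^n}|\theta_L(\cdot,0)|\,d\nu\simeq(\prodj)^{-1}$, so your write-up is essentially a faithful expansion of the paper's omitted argument.
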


\section{Large deviations. Proof of Theorem \ref{smoothlargedeviations}}

Applying Stokes' theorem, we have
\begin{align*}
I_{L}(\varphi)-\E\left[I_{L}(\varphi)\right]&
 =\int_{\D^n}\varphi\wedge \frac{i}{2\pi}\partial\overline{\partial}\log\frac{|f_L|^2}{K_L(z,z)}
=\int_{\D^n}\log\frac{| f_L|^2}{K_L(z,z)}\frac{i}{2\pi}\partial\overline{\partial}\varphi.
\end{align*}
Thus,
\[
|I_{L}(\varphi)-\E[I_{L}(\varphi)]|\leq \|D\varphi\|_\infty \int_{\text{supp}\varphi}\left|\log |\hat f_{L}(z)|^2\right|d\nu(z).
\]
Writing the form as
\[
 \varphi=\bigl(\frac i{2\pi}\bigr)^{n-1} \sum_{j,k=1}^n\varphi_{jk}\Bigl(\frac{dz_1}{1-|z_1|^2}\wedge\stackrel{\stackrel{\hat \j}{\smile}}{\dots}\wedge\frac{dz_n}{1-|z_n|^2}\wedge \frac{d\bar z_1}{1-|z_1|^2}\wedge\stackrel{\stackrel{\hat k}{\smile}}{\dots}\wedge\frac{d\bar z_n}{1-|z_n|^2} \Bigr)
\]
we see that
\[
 \varphi\wedge\omega_L=\sum_{j=1}^n\varphi_{jj} L_j\bigwedge_{k=1}^n\frac i{2\pi}\frac{dz_k\wedge d\bar z_k}{(1-|z_k|^2)^2}=\bigl(\sum_{j=1}^n L_j\varphi_{jj}\bigr) d\nu(z)\ ,
\]
and therefore
\begin{equation}\label{phiomega}
 \bigl|\varphi\wedge\omega_L\bigr|\lesssim c(\varphi) \bigl(\sum_{j=1}^n L_j \bigr)\ .
\end{equation}

This shows that the proof of Theorem \ref{smoothlargedeviations} will be completed as soon as we prove the following Lemma.

\begin{lemma}\label{mainlemmaSLD}
For any $\varphi\in\mathcal D_{(n-1,n-1)}$ and any $\delta>0$ there exists $c>0$ such that
\[
\mathbb P\left[ \int_{supp\ \varphi}\left|\log |\hat f_L(z)|^2 \right|d\nu(z)>\delta \sumj \right]\leq e^{-c\spj}.
\]
\end{lemma}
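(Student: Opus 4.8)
The plan is to bound the two "halves" of the integral separately, writing
\[
\int_{\operatorname{supp}\varphi}\bigl|\log|\hat f_L(z)|^2\bigr|\,d\nu(z)
=\int_{\operatorname{supp}\varphi}\bigl(\log|\hat f_L(z)|^2\bigr)_+\,d\nu(z)
+\int_{\operatorname{supp}\varphi}\bigl(\log|\hat f_L(z)|^2\bigr)_-\,d\nu(z),
\]
and to show that each term exceeds $\tfrac{\delta}{2}\sumj$ only with probability at most $e^{-c\spj}$. Here $\hat f_L(z)=f_L(z)[1-|z|^2]^{L/2}$ is the normalised GAF, so that for each fixed $z$ the random variable $\hat f_L(z)$ is a standard complex Gaussian $N_\C(0,1)$, and more generally $\log|\hat f_L(z)|^2$ has a fixed, $z$-independent distribution with all exponential moments controlled. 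The key point is that the normalising factor $[1-|z|^2]^{L/2}$ makes everything depend on $L$ only through the \emph{correlation structure}, not through the one-point marginals.

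For the \textbf{upper tail} (the $(\cdot)_+$ part) the argument is soft: by Jensen's inequality applied to the probability measure $d\nu/\nu(\operatorname{supp}\varphi)$ on $\operatorname{supp}\varphi$,
\[
\exp\Bigl(\frac{t}{\nu(\operatorname{supp}\varphi)}\int_{\operatorname{supp}\varphi}\log|\hat f_L(z)|^2\,d\nu(z)\Bigr)
\le \frac{1}{\nu(\operatorname{supp}\varphi)}\int_{\operatorname{supp}\varphi}|\hat f_L(z)|^{2t}\,d\nu(z),
\]
and taking expectations, by Fubini and $\E|\hat f_L(z)|^{2t}=\Gamma(1+t)$, gives a bound uniform in $L$. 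Choosing $t=t(\delta)$ of the order $\spj/\sumj$ — equivalently, applying the Chernoff/Markov inequality with exponent proportional to $\prodj$ — converts the deviation $\int(\log|\hat f_L|^2)_+\,d\nu>\tfrac\delta2\sumj$ into the stated bound $e^{-c\spj}$. This is exactly the scheme of \cite{BMP}*{Section 5} and of \cite{Bu-T13}.

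The \textbf{lower tail} (the $(\cdot)_-$ part) is the main obstacle, because $\log|\hat f_L(z)|^2\to-\infty$ near the zeros of $f_L$, and one must rule out that $f_L$ is uniformly small on $\operatorname{supp}\varphi$. The standard route, which I would follow, is: (i) fix a covering of $\operatorname{supp}\varphi$ by roughly $N\simeq\prodj$ pseudo-hyperbolic polydisks $E(w_k,r)$ of fixed polyradius, chosen so the intensity measure $\omega_L$ of each is bounded above and below by constants times $\sumj/N$; (ii) on each such polydisk, use the translation-invariance under $\mathcal A$ to reduce to $E(0,r)$, and show via a first-moment/small-ball estimate that $\mathbb P[\sup_{E(w_k,r)}|\hat f_L|<\varepsilon]$ is exponentially small — here one needs a quantitative lower bound on $\E[\log|\hat f_L|]$ and an anti-concentration (small ball) estimate for the Gaussian field, of the type $\mathbb P[\,\|\hat f_L\|_{L^\infty(E(w_k,r))}\le\varepsilon\,]\le \varepsilon^{c\,(\text{local intensity})}$; (iii) control the negative contribution of the genuine zeros: a zero inside $E(w_k,r)$ contributes at most $\int_{E(w_k,r)}|\log\rho(\cdot)|\,d\nu\lesssim 1$ to the integral, and the expected number of zeros in $\operatorname{supp}\varphi$ is $\simeq\sumj$, so by the large-deviation control on the number of zeros (Corollary~\ref{conseqSLD}) the total negative contribution is $\le\tfrac\delta2\sumj$ outside an event of probability $e^{-c\spj}$; (iv) combine (ii) over the $N\simeq\prodj$ polydisks — the bad event forces $f_L$ to be small on one of them, and a union bound over $N$ polydisks each with probability $\le e^{-c'\sumj}$ (local exponent) still yields $N\,e^{-c'\sumj}\le e^{-c\spj}$ after adjusting constants, since $\log N\simeq\sum\log L_j=\mathrm{o}(\spj)$. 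I expect step (ii), the quantitative small-ball estimate for $\|\hat f_L\|_{L^\infty(E(0,r))}$ with the correct dependence of the exponent on the local intensity, to be the technical heart of the argument; it is the polydisk analogue of the estimate carried out in \cite{BMP}*{Section 5}, and one reduces it, using the reproducing kernel and the explicit $\theta_L$ from \eqref{norm-kernel}, to a finite-dimensional Gaussian anti-concentration bound.
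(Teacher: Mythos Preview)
Your upper-tail argument (Jensen + Markov with a large exponent $t$) is essentially sound and in fact overshoots: the optimal $t$ gives a doubly-exponential bound, more than enough. But the lower-tail argument has two genuine gaps.

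\textbf{Circularity.} In step (iii) you invoke Corollary~\ref{conseqSLD} to control the zero set on $\operatorname{supp}\varphi$. That corollary, however, is derived from Theorem~\ref{smoothlargedeviations}, and Theorem~\ref{smoothlargedeviations} is proved \emph{via} Lemma~\ref{mainlemmaSLD}. So you are assuming the result you want to establish. (Also, for $n>1$ the zero set is a hypersurface, not a discrete set, so the heuristic ``each zero contributes $\int|\log\rho|\,d\nu\lesssim 1$'' does not transfer directly.)

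\textbf{Wrong exponent from the union bound.} In step (iv) you cover by $N\simeq\prodj$ polydisks and claim a local small-ball bound $e^{-c'\sumj}$ on each. But then the union bound gives only
\[
N\,e^{-c'\sumj}\ \le\ e^{\sum_j\log L_j - c'\sumj}\ \simeq\ e^{-c''\sumj},
\]
which is far weaker than the target $e^{-c\spj}$. No rearrangement of constants recovers the missing factor $\prodj$ in the exponent: to get $e^{-c\spj}$ this way you would need the \emph{local} exponent already to be $\spj$, not $\sumj$.

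\textbf{What the paper does instead.} The covering is by a number $N=N(\delta,\varphi)$ of pseudo-hyperbolic polydisks of small invariant volume $\eta$, with $N$ and $\eta$ \emph{independent of $L$}. On each such polydisk one applies Lemma~\ref{controlmean}, which already carries the full exponent: outside an event of probability $e^{-c\spj}$,
\[
\int_{E}|\log|\hat f_L|^2|\,d\nu\ \le\ 5n\,(\textstyle\sumj)\,\nu(E)^{1+1/n}.
\]
Summing over the $N$ polydisks and choosing $\eta$ so that $\nu(K)\eta^{1/n}<\delta$ finishes. The point is that the $\spj$ exponent is obtained \emph{locally}, via Lemma~\ref{controlmax}: on a fixed polydisk $E(0,r)$, both events $\{\max_E\log|\hat f_L|^2>\delta\sumj\}$ and $\{\max_E\log|\hat f_L|^2<-\delta\sumj\}$ have probability $\le e^{-c\spj}$. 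The mechanism is that the bad event forces $\simeq\prodj$ independent Taylor coefficients $a_\alpha$ to each lie in a set of probability $\simeq e^{-c\sumj}$. The lower-tail half (Lemma~\ref{controlmax}(a)) then feeds into Lemma~\ref{controlmean} through the sub-mean-value inequality of Lemma~\ref{invlemma}, bypassing any need to count zeros.
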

The key ingredient in the proof of this lemma is given by the following control on the average of
$\bigl|\log |\hat f_L|^2\bigr|$ over pseudo-hyperbolic polydisks.

\begin{lemma}\label{controlmean}
There exists a constant $c>0$ such that for a pseudo-hyperbolic polydisk $E=E(z_0,s)$, $z_0\in\D^n$, $s\in (0,1)$,
\[
\mathbb P\left[\frac{1}{\nu(E)}\int_{E}\left|\log |\hat f_L(\xi)|^2 \right|d\nu(\xi)>5 n(\sumj)\nu(E)^{1/n} \right]\leq e^{-c\spj}.
\]
\end{lemma}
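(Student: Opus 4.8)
The plan is to control the mean oscillation of $\log|\hat f_L|^2$ over a pseudo-hyperbolic polydisk by exploiting the automorphism invariance of the GAF to reduce to the case $z_0 = 0$, and then to split $\log|\hat f_L|^2 = \log^+|\hat f_L|^2 - \log^-|\hat f_L|^2$ and bound the two parts separately. For the positive part, the key is a pointwise a priori bound: by the reproducing kernel estimate, $|\hat f_L(\xi)|^2 = |f_L(\xi)|^2/K_L(\xi,\xi)$ and one can write $\hat f_L(\xi) = \langle f_L, \kappa_\xi\rangle$ with $\kappa_\xi$ the normalised reproducing kernel, so $|\hat f_L(\xi)| \le \|f_L\|$ is too crude; instead I would use that on a pseudo-hyperbolic polydisk $E(0,s)$ of small $\nu$-measure, sub-mean-value estimates for $|f_L|^2$ together with the explicit form of $K_L$ give $\log^+|\hat f_L(\xi)|^2 \lesssim \sum_j L_j \cdot (\text{something small in } s)$ deterministically — this is where the exponent $\nu(E)^{1/n}$ enters, since $\nu(E(0,s)) \simeq \prod_j s_j^2/(1-s_j^2)$ and the geometric-mean comparison $\bigl(\prod_j s_j^2\bigr)^{1/n}$ appears naturally from averaging a sum $\sum_j L_j$ against a product measure.

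**Next I would** handle the negative part $\log^-|\hat f_L|^2 = \bigl(\log 1/|\hat f_L|^2\bigr)^+$, which is the genuinely probabilistic ingredient. Here the standard tool (following Sodin–Tsirelson and the one-dimensional argument in \cite{BMP}, \cite{Bu-T13}) is the distributional inequality $\mathbb P[|\hat f_L(\xi)| < \epsilon] \lesssim \epsilon^2$ valid for a \emph{fixed} point $\xi$, coming from the fact that $\hat f_L(\xi) \sim N_\C(0,1)$. The difficulty is that we need this uniformly enough to integrate over $E$; the route is to cover $E$ by a controlled number of smaller pseudo-hyperbolic polydisks on which $|\hat f_L|$ does not oscillate too much (using the deterministic Lipschitz-type control on $\log|\hat f_L|^2$ away from zeros, or Cauchy estimates on $f_L$), apply the small-ball estimate at the centres, and take a union bound. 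Because we are allowed an exponentially small failure probability $e^{-c\spj}$, we can afford a union bound over polynomially — or even mildly exponentially — many sub-polydisks, and we can also afford to first intersect with the event $\{\|f_L\|_{\text{some norm}} \le e^{C\spj}\}$, whose complement has the required small probability by a Gaussian tail estimate on the coefficients $a_\alpha$.

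**The main obstacle** I anticipate is the bookkeeping in the multi-dimensional covering argument: in dimension $n$ one must cover a thin polydisk $E(0,s)$ (thin in each coordinate by possibly very different amounts, since the $L_j$ and hence the relevant scales differ) by sub-polydisks small enough that $|\log|\hat f_L|^2|$ varies by at most a constant multiple of the target $5n(\sumj)\nu(E)^{1/n}$, while keeping the \emph{number} of such sub-polydisks under control so that the union bound over the small-ball events survives against $e^{-c\spj}$. Getting the constant $5n$ and the exponent $1/n$ to come out exactly requires carefully tracking how a sum $\sumj$ interacts with the product structure of $\nu(E)$ under the arithmetic–geometric mean inequality — this is precisely the point where the $n$-dimensional statement genuinely differs from iterating the one-dimensional one, and where \cite{BMP} must be consulted for the ball analogue. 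Once the two parts are combined, $\frac{1}{\nu(E)}\int_E |\log|\hat f_L|^2| \, d\nu \le \frac{1}{\nu(E)}\int_E \log^+|\hat f_L|^2\,d\nu + \frac{1}{\nu(E)}\int_E \log^-|\hat f_L|^2\,d\nu$ and each term is bounded by (a fraction of) $5n(\sumj)\nu(E)^{1/n}$ off an event of probability $\le e^{-c\spj}$, giving the claim.
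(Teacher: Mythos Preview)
Your proposal has a genuine gap in each half of the $\log^+/\log^-$ split.

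For $\log^+$: the claimed \emph{deterministic} bound $\log^+|\hat f_L(\xi)|^2 \lesssim (\sumj)\cdot(\text{small in }s)$ is false. For each fixed $\xi$ the variable $\hat f_L(\xi)$ is a standard complex Gaussian, so $\log^+|\hat f_L(\xi)|^2$ is almost surely finite but unbounded; a sub-mean-value inequality for $|f_L|^2$ only bounds $|f_L(\xi)|^2$ by a \emph{random} average, not a deterministic quantity. The paper handles this probabilistically via Lemma~\ref{controlmax}(b): outside an event of probability $\le e^{-c\spj}$ one has $\max_E\log|\hat f_L|^2\le\delta\sumj$, proved by a Taylor-coefficient analysis of $f_L$.

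For $\log^-$: the covering-plus-small-ball scheme cannot control $\int_E\log^-|\hat f_L|^2\,d\nu$. The integrand equals $+\infty$ at every zero of $f_L$ and admits no Lipschitz control nearby, so a pointwise bound on a net of centres says nothing about the integral; and the single-point estimate $\mathbb P[|\hat f_L(\xi)|<\epsilon]\le\epsilon^2$ is only polynomially small, far short of $e^{-c\spj}$ after any union bound. The ingredient you are missing is the \emph{mean-value inequality for the plurisubharmonic function} $\log|f_L|$ (Lemma~\ref{invlemma}): a lower bound $\log|\hat f_L(\lambda)|^2>-\delta\sumj$ at a \emph{single} point $\lambda\in E$ --- supplied, with the required probability, by Lemma~\ref{controlmax}(a) --- already forces $\frac{1}{\nu(E)}\int_E\log|\hat f_L|^2\,d\nu$ to be bounded below up to a correction $\sumj\eps(s_j)\le\sumj s_j^2/(1-s_j^2)$. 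Then $\int_E\log^-\le\int_E\log^+ +|\int_E\log|$ closes the argument, and the constant $5n$ together with the exponent $\nu(E)^{1/n}$ drop out of $\nu(E)=\prod_j s_j^2/(1-s_j^2)$ without any covering at all.
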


Let us see first how this allows to complete the proof of Lemma~\ref{mainlemmaSLD}, and therefore of Theorem~\ref{smoothlargedeviations}.

\begin{proof}[Proof of Lemma~\ref{mainlemmaSLD}] Cover $K:=supp\; \varphi$ with pseudo-hyperbolic polydisks $E_k=E(\lambda_k,\eps)$, $k=1,\dots,N$ of fixed invariant volume $\nu(E_k)=\eta$ (to be determined later on). A direct estimate shows that $N\simeq \nu(K)/\eta$.

By Lemma~\ref{controlmean}, outside an exceptional event of probability $Ne^{-c\spj}\leq e^{-c'\spj}$,
\begin{align*}
\int_{K}\left|\log|\hat f_L(\xi)|^2\right|d\nu(\xi)&\leq \sum^{N}_{k=1}\int_{E_k}\left|\log |\hat f_L(\xi)|^2 \right|d\nu(\xi) \lesssim 
(\sumj) N \eta^{1+1/n}\\
& \simeq (\sumj) \nu(K) \eta^{1/n}.
\end{align*}
Choosing $\eta$ such that $\nu(K)\eta^{1/n}=\delta$ we are done.
\end{proof}

Now we proceed to prove Lemma~\ref{controlmean}. A first step is the following lemma.

\begin{lemma}\label{controlmax}
Fix $r=(r_1,\dots,r_n)\in (0,1)^n$, $z_0\in\D^n$ and $\delta>0$. There exists $c>0$ and $L_j^0=L_j^0(r,\delta)$ such that for all $L_j\geq L_j^0$
\begin{itemize}
 \item[(a)] $P \bigl[\max\limits_{E(z_0,r)}\log|\hat f_L(z)|^2<-\delta \sumj \bigr]\leq e^{-c\spj }$,
 \item[(b)] $P \bigl[\max\limits_{E(z_0,r)}\log|\hat f_L(z)|^2>\delta \sumj \bigr]\leq e^{-c\spj}$.
\end{itemize}
Combining both estimates $\mathbb P\bigl[\max\limits_{E(z_0,r)}\left|\log |\hat f_L(z)|^2\right|>\delta \sumj \bigr]\leq e^{-c\spj}$.

\end{lemma}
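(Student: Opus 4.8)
The plan is to reduce both (a) and (b) to one-variable-type estimates of the kind used by Sodin–Tsirelson, exploiting the product structure of the covariance kernel $\theta_L$ and the normalisation $\hat f_L(z)=f_L(z)/\sqrt{K_L(z,z)}$. Throughout we may, by the invariance of the distribution of $Z_{f_L}$ (and of $|\hat f_L|$ in distribution up to a unimodular factor) under the automorphisms in $\mathcal A$, assume $z_0=0$, so that we are controlling $\max_{E(0,r)}\log|\hat f_L|^2$.

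For part (b), the upper bound, I would start from the pointwise bound on the covariance: for $z\in E(0,r)$ the variance $\E|\hat f_L(z)|^2=1$, so $\hat f_L(z)$ is a standard complex Gaussian for each fixed $z$, and $\E[\log|\hat f_L(z)|^2]$ is a fixed constant. The event $\{\log|\hat f_L(z)|^2>\delta\sumj\}$ is, for a single point, a Gaussian tail of probability $\le e^{-c\delta\sumj}$, but we need it uniformly over the polydisk $E(0,r)$. The standard device is to bound $\max_{E(0,r)}|\hat f_L(z)|$ by $\max$ over a slightly larger polydisk $E(0,r')$, $r_j<r'_j<1$, of $|f_L|$ times $\max_{E(0,r)} K_L(z,z)^{-1/2}$, and then use that $\log^+\max_{E(0,r')}|f_L|$ is controlled by $\log^+ M_L$ where $M_L=\sum_\alpha |a_\alpha| \|e_\alpha\|_{L^\infty(E(0,r'))}$; since the norming constants decay geometrically in $|\alpha|$ on a fixed sub-polydisk, $M_L$ has finite moments with Gaussian-type tails of the right order $e^{-c(\sum L_j)(\prod L_j)}$ — here the extra factor $\prod L_j$ over the naive $\sum L_j$ comes from summing the geometric series whose ratio is bounded away from $1$ by a power of $1/L_j$ in each variable, exactly as in \cite{BMP}. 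This is the routine half.

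For part (a), the lower bound — preventing $\hat f_L$ from being uniformly small on $E(0,r)$ — is the main obstacle, as always in hole-type arguments. The approach of Sodin–Tsirelson, followed in \cite{BMP}, is to compare the $L^2$ mass of $\hat f_L$ on $E(0,r)$ against a weighted norm that is comparable to $|a_0|^2$ (the free term): more precisely, one uses that on $E(0,r)$ one has $|\hat f_L(z)|^2 \gtrsim$ some fixed positive multiple of $|f_L(z)|^2\,[1-|z|^2]^L$, and then projects $f_L$ onto the constant, so that $\int_{E(0,r)} |f_L(z)|^2 [1-|z|^2]^L\,d\nu(z)$ is bounded below, up to a constant depending only on $r$ and $n$, by $|a_0|^2$ minus a contribution from the higher monomials. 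The higher-monomial contribution is a sum of independent Gaussians whose total variance, after restricting the integral to $E(0,r)$, is a geometric series summing to something $\lesssim 1$; so with probability $\ge 1-e^{-c(\sum L_j)(\prod L_j)}$ we have both $|a_0|^2\ge \tfrac14$ (in fact this costs only a constant-probability event, which we can upgrade by noting the real obstruction is the small-deviation bound below) and the tail sum $\le \tfrac18$, whence $\int_{E(0,r)}|\hat f_L|^2\,d\nu \gtrsim 1$. Combined with the upper bound on $\max_{E(0,r)}|\hat f_L|$ from part (b) (on the intersection of the good events), and the trivial inequality $\int_{E(0,r)}|\hat f_L|^2\,d\nu \le \nu(E(0,r))\max_{E(0,r)}|\hat f_L|^2$, this forces $\max_{E(0,r)}|\hat f_L|^2 \gtrsim 1 \gg e^{-\delta\sumj}$ once the $L_j$ are large, giving (a).

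Finally, combining (a) and (b): on the complement of the union of the two exceptional events, which has probability at least $1-2e^{-c\spj}\ge 1-e^{-c'\spj}$ for a smaller $c'>0$, we have simultaneously $\max_{E(z_0,r)}\log|\hat f_L|^2 \ge -\delta\sumj$ and $\le \delta\sumj$, i.e. $\max_{E(z_0,r)}|\log|\hat f_L|^2|\le \delta\sumj$; relabelling $c'$ as $c$ gives the combined statement. The one point demanding care is that the probability that $|a_0|^2\ge \tfrac14$ is only a fixed constant, not $1-e^{-c\spj}$; but this is harmless because part (a) is an \emph{upper} bound on a probability of a \emph{bad} event — we in fact want the good event $\{|a_0|^2 \ge \tfrac14\} \cap \{\text{tail small}\}$ to have probability close to $1$, and the deficiency is absorbed by observing that the genuinely small probability we are bounding is $\mathbb P[\max < -\delta\sumj]$, which is contained in the bad event $\{\int_{E(0,r)}|\hat f_L|^2 d\nu < \text{small}\}$; that integral being small is itself an exponentially rare event of the required order, because it forces $|a_0|$ together with enough of the low-order coefficients to be simultaneously tiny, a joint Gaussian small-ball estimate of order $e^{-c(\sum L_j)(\prod L_j)}$ by the same geometric-series count. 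This is the delicate step and the place where the precise exponent $\spj$ is pinned down, exactly as in \cite{BMP}*{Section 5}, so I would import that computation verbatim.
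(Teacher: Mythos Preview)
Your reduction to $z_0=0$ via the invariance under $\mathcal A$ is correct and matches the paper.

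For (a), the paper's argument is more direct than yours and avoids the detour through the $L^2$ integral. From $\max_{E(0,r)}\log|\hat f_L|^2<-\delta\sum_j L_j$ one deduces $\max_{E(0,r)}|f_L| \leq [1-r^2]^{-(\frac12-\tilde\delta)L}$, and then \emph{Cauchy's inequalities} applied to the Taylor coefficients give $|a_\alpha|^2 \leq \prod_j(1+\epsilon_j)^{-\alpha_j}$ for every $\alpha$ in a box $\mathcal I=\{c_1 L_j \leq \alpha_j \leq c_2 L_j\}$. The number of such indices is $\simeq \prod_j L_j$, and each constraint has probability $\simeq e^{-c\sum_j L_j}$; independence then yields $e^{-c\spj}$. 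Your $L^2$ route could in principle be made to work, since the monomials remain orthogonal on $E(0,r)$ for the radially symmetric weight, so $\int_{E(0,r)}|\hat f_L|^2\,d\nu=\sum_\alpha |a_\alpha|^2 b_\alpha$ with $b_\alpha \simeq 1/\prod_j L_j$ for $\alpha_j\lesssim L_j$. But you never carry out this computation; your first framing (conditioning on $|a_0|^2\geq \tfrac14$ to force the \emph{good} event) is in the wrong logical direction, as you yourself notice, and the ``fix'' you sketch at the end amounts to rediscovering the coefficient count by a less transparent path.

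For (b) there is a genuine gap. You assert that $M_L=\sum_\alpha|a_\alpha|\,\|e_\alpha\|_{L^\infty(E(0,r'))}$ has tail $\leq e^{-c\spj}$, with the factor $\prod_j L_j$ arising from ``summing the geometric series whose ratio is bounded away from $1$ by a power of $1/L_j$''. This is not how the exponent arises, and in fact (b) holds with a doubly exponential bound in $\sum_j L_j$. The paper splits the indices into $I_1=\{\alpha:\alpha_j\leq C_j\delta L_j\ \forall j\}$ and its complement $I_2$; Cauchy--Schwarz on $I_1$ gives the bound $(\sum_{I_1}|a_\alpha|^2)^{1/2}\sqrt{K_L(z,z)}$ for the low part, while the tail over $I_2$ is controlled on the high-probability event $A=\{|a_\alpha|\leq e^{\gamma|\alpha|_\infty}\ \forall\alpha\in I_2\}$. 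On $A\cap\mathcal E_2$ one is forced to have $\sum_{I_1}|a_\alpha|^2>e^{\frac\delta2\sum_j L_j}$, whose probability (a union bound over $\simeq\prod_j L_j$ exponentials) is $\leq e^{-e^{c\sum_j L_j}}$, and $\mathbb P[A^c]$ is of the same type. Your crude $M_L$ bound, without this low/high split, does not yield any quantitative tail: the terms $\|e_\alpha\|_{L^\infty(E(0,r'))}$ aggregate to something of size $K_L(r',r')^{1/2}=[1-r'^2]^{-L/2}$, and it is precisely the Cauchy--Schwarz step on $I_1$ that converts this into a useful inequality.
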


\begin{proof} By the invariance of the distribution of $\hat f$ under $\mathcal A$, it is enough to consider the case $z_0=0$.

(a) Consider the event
 \[
  \mathcal E_1=\left\{ \max_{E(0,r)}\log|\hat f_L(z)|^2<-\delta \sumj \right\}\ .
 \]
Note that
\begin{align*}
\log |\hat f_L(z)|^2=\log\frac{|f_L(z)|^2}{K_L(z,z)}& =\log|f_L(z)|^2-\log \prod_{j=1}^n \frac{1}{(1-|z_j|^2)^{L_j}},
\end{align*}
hence, by plurisubharmonicity,
\begin{align*}
\mathcal E_1& \subset \left\{ \max_{E(0,r)} \log |f_L(z)|^2 \leq \sumj \bigl(\log\frac{1}{1-r_j^2}- \delta\bigr)  \right\}.
\end{align*}

Therefore, for a suitable $\tilde\delta=\tilde\delta(r)$,
\[
 \mathcal E_1 \subset \left\{ \max_{E(0,r)} \log |f_L(z)|^2 \leq (1-2\tilde\delta) \bigl(\sumj \log\frac{1}{1-r_j^2}\bigr)\right\}\ 
\]
and the estimate of $\mathbb P[\mathcal E_1]$ will be done as soon as we prove the following lemma.

\begin{lemma}\label{maxlogfL}
For $0<\delta<1/2$ and a polyradius $r=(r_1,\dots,r_n)$ there exist $c=c(\delta,r)$ and $L_j^0=L_j^0(\delta,r)$ such that for all $L_j\geq L_j^0$
\[
\mathbb P\left[\max_{E(0,r)} \log|f_L(z)| \leq \bigl(\frac{1}{2}-\delta\bigr)\sumj \log\frac{1}{1-r_j^2} \right]\leq e^{-c\spj}\ ·
\]
\end{lemma}

\begin{proof} [Proof of Lemma~\ref{maxlogfL}]
Under the event we want to estimate we have
\[
\max_{|z|=r}|f_L(z)|\leq [1-r^2]^{-L\left(\frac{1}{2}-\delta\right)}\ .
\]
We shall see that this implies that some coefficients of the series of $f_L$ are necessarily ``small", something that only happens with a probability less than $\exp(-c\spj)$. 

Writing $f_L$ in Taylor series we see that
\[
a_{\alpha}=\left(\prod_{j=1}^n \frac{\alpha_j!\Gamma(L_j)}{\Gamma(L_j+\alpha_j)}\right)^{1/2}\frac{\partial^{\alpha}f_L(0)}{\alpha!}.
\]
With this and Cauchy's estimates
\[
\left| \frac{\partial^{\alpha}f_L(0)}{\alpha!}\right|\leq \frac{\max\limits_{E(0,r)} |f_L|}{r^\alpha}
\]
we have
\begin{equation*}\label{coeff}
|a_{\alpha}|\leq \left(\prod_{j=1}^n \frac{\alpha_j!\Gamma(L_j)}{\Gamma(L_j+\alpha_j)}\right)^{1/2}
\frac 1{r^\alpha [1-r^2]^{(\frac 12-\delta)L}} \ .
\end{equation*}

Stirling's formula 
\begin{equation*}\label{stirling}
 \Gamma(z)=\sqrt{\frac{2\pi}z} \left(\frac ze\right)^z \bigl[1+\textrm O(\frac 1z)\bigr]
\end{equation*}
yields 
\begin{align*}
\frac{\alpha_j!\Gamma(L_j)}{\Gamma(L_j+\alpha_j)}& \simeq \sqrt{2\pi}\alpha_j \sqrt{\frac{L_j+\alpha_j}{L_j \alpha_j}}
 \left(\frac{\alpha_j}{L_j+\alpha_j}\right)^{\alpha_j} \left(\frac{L_j}{L_j+\alpha_j}\right)^{L_j}\\
 &\leq \alpha_j 
 \left(\frac{\alpha_j}{L_j+\alpha_j}\right)^{\alpha_j} \left(\frac{L_j}{L_j+\alpha_j}\right)^{L_j}
\end{align*}

\textit{Claim:} For $\epsilon_j$ small and for the indices in
\[
 \mathcal I=\Bigl\{\alpha : L_j\bigl(\frac 1{(1-r_j^2)^{1-2\delta}}-1\bigr)\leq \alpha_j\leq \frac {L_j}{\frac{(1+\epsilon_j)^2}{r_j^2}-1}, \ j=1,\dots, n\Bigr\}
\]
the following estimate holds
\[
 |a_{\alpha}|^2\leq \prod_{j=1}^n (1+\epsilon_j)^{-\alpha_j}\ .
\]

\textit{Proof:}
The lower bound on $\alpha_j$ implies that
\[
 \left(\frac{L_j}{L_j+\alpha_j}\right)^{L_j/2} (1-r_j^2)^{-L_j (\frac 12-\delta)}\leq 1\ ,
\]
and therefore
\[
  |a_{\alpha}| \leq \prod_{j=1}^n \alpha_j^{1/2} \left(\frac{\alpha_j}{L_j+\alpha_j}\right)^{\alpha_j/2} \frac 1{r_j^{\alpha_j}}\ .
\]
The upper bound in $\mathcal I$ yields
\[
 \alpha_j^{1/2} \left(\frac{\alpha_j}{L_j+\alpha_j}\right)^{\alpha_j/2} \frac 1{r_j^{\alpha_j}} \leq (1+\epsilon_j)^{-\alpha_j/2}\ ,
\]
which gives the claim. $\square$

\medskip

Since for $\alpha\in\mathcal I$ we have $cL_j\leq \alpha_j\leq CL_j$, we deduce that the number of indices in $\mathcal I$ is of order $\prodj$. Therefore, letting $\xi\sim N_{\C}(0,1)$, and using the Claim, we have
\begin{align*}
 \mathbb P[\mathcal E_1]&\leq \mathbb P\Bigl[|a_\alpha|^2\leq \prod_{j=1}^n (1+\epsilon_j)^{-\alpha_j},\ \alpha\in\mathcal I\Bigr]\leq
 \left( P\Bigl[|\xi|^2\leq \prod_{j=1}^n (1+\epsilon_j)^{-\alpha_j}\Bigr] \right)^{c\prodj}\ .
\end{align*}

We finish by noticing that for $\alpha\in\mathcal I$
\begin{align*}
 P\Bigl[|\xi|^2\leq \prod_{j=1}^n (1+\epsilon_j)^{-\alpha_j}\Bigr]&=1-\exp\bigl(-e^{-\sum\limits_{j=1}^n \alpha_j\log(1+\epsilon_j)}\bigr)
 \simeq e^{-\sum\limits_{j=1}^n \alpha_j\log(1+\epsilon_j)}
 \simeq e^{-c\sum\limits_{j=1}^n L_j }\ .
\end{align*}
This finishes the proof of (a) in Lemma~\ref{controlmax}.
\end{proof}

 (b) Let now
 \begin{align*}
  \mathcal E_2:&=\left\{\max_{E(0,r)}\log|\hat f_L(z)|^2>\delta \sumj\right\}\\
  &=\left\{\max_{E(0,r)}\left[\log |f_L(z)|-\sum_{j=1}^n\frac{L_j}{2}\log\bigl(\frac{1}{1-|z_j|^2}\bigr)\right]>\frac{\delta}2 \sumj\right\}.
 \end{align*}
We estimate the probability of this event by controlling the coefficients of the series of $f_L$.
Let $C_j>0$ be constants to be determined later on. Split the series defining $|f_L|$ into two families of indices:
\begin{align*}
 I_1&=\bigl\{\alpha : \alpha_j\leq C_j \delta L_j,\ j=1,\dots,n\bigr\} \\
 I_2&=\mathbb N^n\setminus I_1=\bigl\{\alpha : \exists j\in\{1,\dots,n\}\ : \alpha_j> C_j \delta L_j\bigr\}
\end{align*}

Then, using Cauchy-Schwarz for the indices $I_1$ we see that
\begin{align*}
|f_L(z)|&\leq  \left(\sum_{\alpha\in I_1}|a_{\alpha}|^2\right)^{1/2} 
\left(\sum_{\alpha\in I_1} \prod_{j=1}^n \frac{\Gamma(L_j+\alpha_j)}{\alpha_j!\Gamma(L_j)} |z_j|^{2\alpha_j} \right)^{1/2}+
\sum_{\alpha\in I_2}  |a_{\alpha}|  \prod_{j=1}^n \left(\frac{\Gamma(L_j+\alpha_j)}{\alpha_j!\Gamma(L_j)}\right)^{1/2} r^{\alpha}\\
&\leq \left(\sum_{\alpha\in I_1}|a_{\alpha}|^2\right)^{1/2} \left(\prod_{j=1}^n \sum_{\alpha_j=0}^\infty \frac{\Gamma(L_j+\alpha_j)}{\alpha_j!\Gamma(L_j)} |z_j|^{2\alpha_j}\right)^{1/2} +\sum_{\alpha\in I_2}  |a_{\alpha}|  \prod_{j=1}^n \left(\frac{\Gamma(L_j+\alpha_j)}{\alpha_j!\Gamma(L_j)}\right)^{1/2} r^{\alpha}\\
&= \left(\sum_{\alpha\in I_1}|a_{\alpha}|^2\right)^{1/2} \sqrt{K_L(z,z)}+ \sum_{\alpha\in I_2}  |a_{\alpha}|  \prod_{j=1}^n \left(\frac{\Gamma(L_j+\alpha_j)}{\alpha_j!\Gamma(L_j)}\right)^{1/2} r^{\alpha}\\
&=:(I)+(II).\nonumber
\end{align*}

We shall estimate each part separately. First we shall see that, except for an event of small probability, $(II)$ is bounded (if $C_j$ are chosen appropiately). 

Fix $\gamma$ (to be determined later) and consider the event
\[
 A=\bigl\{ |a_\alpha|\leq e^{\gamma |\alpha|_\infty},\ \forall\alpha\in I_2 \bigr\}.
\]
Here $|\alpha|_\infty=\max_j \alpha_j$. We also use the notation
\[
 C_*=\min_j C_j\qquad ,\qquad L_*=\min_j L_j\qquad ,\qquad  L^*=\max_j L_j\ .
\]
Notice that $L^*\simeq\sumj$ and that for $\alpha\in I_2$
\[
 |\alpha|_\infty\geq\delta\min_j C_j L_j\geq\delta L_*  C_*\ .
\]
We split the indices $I_2$ in level sets
\[
 I_2^m=\bigl\{\alpha\in I_2 : |\alpha|_\infty=m\bigr\}\ .
\]
Observe that
\begin{equation}\label{i2m}
 m^{n-1}\leq \# I_2^m\leq n m^{n-1}
\end{equation}
and that for $\alpha\in I_2^m$ 
\[
 \prod_{j=1}^n \frac{\Gamma(L_j+\alpha_j)}{\alpha_j! \Gamma(L_j)}\leq \left(\frac{\Gamma(L^*+m)}{m! \Gamma(L^*)}\right)^n,
\]
since $\frac{\Gamma(L+n)}{n! \Gamma(L)}$ is increasing both in $n$ and $L$.

Under the event $A$, and denoting $r_0=\max_j r_j$, we have for $z\in E(0,r)$,
\begin{align*}
(II)& \leq 
\sum_{\alpha\in I_2} |a_\alpha|\left(\frac{\Gamma(L^*+|\alpha|_\infty)}{|\alpha|_\infty!\Gamma(L^*) }\right)^{n/2} r_0^{|\alpha|_\infty}
\leq \sum_{m\geq C_*\delta L_*} n m^{n-1} \left(\frac{\Gamma(L^*+m)}{m! \Gamma(L^*) }\right)^{n/2} (r_0 e^\gamma)^m\ .
\end{align*}

The asymptotics of the $\Gamma$-function
\begin{equation}\label{asymptoticsGamma}
 \lim_{m\to\infty}\frac{\Gamma(m+n)}{\Gamma(m) m^n}=1
\end{equation}
yields
\[
 (II)\leq \sum_{m\geq C_*\delta L_*}\left[\frac{m^{\frac 12-\frac 1n+\frac{L^*}2}}{\sqrt{\Gamma(L^*)}}\right]^n (r_0 e^\gamma)^m\ .
\]
By \cite{BMP}*{Lemma 10}, given $\epsilon>0$ there exists $c>0$ such that for $m\geq C_*\delta L_*$
\[
 \frac{m^{\frac 12-\frac 1n+\frac{L^*}2}}{\sqrt{\Gamma(L^*)}}\leq e^{\frac{\epsilon}m n}\ 
\]
and therefore
\[
 (II)\lesssim \sum_{m\geq C_*\delta L_*} (e^{\epsilon+\gamma}r_0)^m= \sum_{m\geq C_*\delta L_*} e^{-[\log\frac1{r_0}-(\epsilon+\gamma)]}\ .
\]
Choose $\epsilon=\gamma=\frac 14 \log\frac1{r_0}$, so that
\[
 (II)\lesssim \sum_{m\geq C_*\delta L_*} (r_0)^{m/2}\leq\frac 1{1-r_0^{1/2}}=:C(r)\ .
\]
With this we obtain the estimate
\[
 |f_L(z)|\leq \left(\sum_{\alpha\in I_1}|a_{\alpha}|^2\right)^{1/2} \sqrt{K_L(z,z)}+ C(r)\ .
\]
Under the event $\mathcal E_2$  we have then
\[
 e^{\frac{\delta}2 \sumj}<\frac{|f_L(z)|}{\sqrt{K_L(z,z)}}\leq \left(\sum_{\alpha\in I_1}|a_{\alpha}|^2\right)^{1/2}+
 \frac{C(r)}{\sqrt{K_L(z,z)}},
\]
and therefore, for $L_j$ big enough
\begin{equation}\label{equ}
 \sum_{\alpha\in I_1}|a_{\alpha}|^2 > e^{\frac{\delta}2 \sumj}\ .
\end{equation}

It remains to estimate the probability of this estimate, and to show that the event $A$ has ``big'' probability. The variables $|a_\alpha|^2$ are independent exponentials, hence
\[
\mathbb P[A]=\prod_{\alpha\in I_2}1-\mathbb P[|a_{\alpha}|\geq e^{\gamma|\alpha|_\infty}]
=\prod_{\alpha\in I_2}\left[1-e^{-e^{2\gamma |\alpha|_\infty}}\right].
\]

Since $x=e^{-e^{2\gamma |\alpha|_\infty}}$ is close to 0, we can use the estimate $\log(1-x)\simeq -x$. Thus, 
\begin{align*}
\log \mathbb P[A]&=\sum_{\alpha\in I_2}\log\left[1-e^{-e^{2\gamma |\alpha|_\infty}}\right]\simeq -
\sum_{\alpha\in I_2} e^{-e^{2\gamma |\alpha|_\infty}}\gtrsim -\sum_{m\geq C_*\delta L_*}n m^{n-1} e^{-e^{2\gamma m}}\\
&- \sum_{m\geq C_*\delta L_*} e^{-e^{\gamma m}}\simeq - e^{-e^{\gamma C_*\delta L_*}} .
\end{align*}

Choosing $C_j$ big enough so that, in addition to the previous conditions, $\gamma C_*>\max_j \log\frac{1}{1-r_j^2}$  we have
\[
 -e^{\gamma C_*\delta L_*}< -\frac 1{(1-r_0^2)^{\delta L_*}}
\]
and therefore
\[
 \log \mathbb P[A]\gtrsim -e^{-[1-r_0^2]^{-\delta L_*}}\ .
\]
On the other hand
\[
 \mathbb P(A\cap\mathcal E_2)\leq\mathbb P\bigl[\sum_{\alpha\in I_1} |a_{\alpha}|^2>e^{\frac{\delta}2 \sumj}\bigr]\ .
\]
Since the number of indices in $I_1$ is at most $M_L:=\prod_{j=1}^n C_j\delta L_j\simeq \prodj$ we have
\begin{align*}
\mathbb P[A\cap \mathcal E_2]&\leq \mathbb P\left[ \sum_{\alpha\in I_1} |a_{\alpha}|^2 \geq \frac 1{M_L} e^{\frac{\delta}2 \sumj}  \right]
= M_L e^{-\frac 1{M_L} e^{\frac{\delta}2 \sumj}}=e^{\sum_{j=1}^n\log(C_j\delta L_j)-\frac 1{M_L} e^{\frac{\delta}2 \sumj}}\ .
\end{align*}
For $L_j$ big enough
\[
 \frac 1{M_L} e^{\frac{\delta}2 \sumj}\geq e^{\frac{\delta}4 \sumj}\ 
\]
and therefore
\[
 \mathbb P[A\cap\mathcal E_2]\leq e^{-e^{\frac{\delta}8 \sumj}}\ .
\]
Also
\[
 \mathbb P[A^c\cap\mathcal E_2]\leq \mathbb P(A^c)\leq 1-e^{-e^{-\min\limits_j (1-r_j^2)^{-\delta L_j}}}\simeq e^{-\min\limits_j (1-r_j^2)^{-\delta L_j}} \ .
\]
All combined 
\[
\mathbb P[\mathcal E_2]=\mathbb P[A\cap \mathcal E_2]+\mathbb P[A^c\cap \mathcal E_2]\leq e^{-e^{\frac{\delta}8 \sumj}}+ e^{-\min\limits_j (1-r_j^2)^{-\delta L_j}}\leq e^{-c\spj} .
\]
\end{proof}

It remains to prove Lemma \ref{controlmean}. Before we proceed we need the following mean-value estimate of $\log |\hat f_L(\lambda)|^2$, which is obtained as \cite{BMP}*{Lemma 11}.

\begin{lemma}\label{invlemma}
Let $\lambda\in\D^n$ and $s\in (0,1)^n$ a polyradius. Then
\[
\log |\hat f_L(\lambda)|^2 \leq \frac{1}{\nu (E(\lambda,s))}\int_{E(\lambda,s)}\log |\hat f_L(\xi)|^2 d\nu(\xi)+\sumj \eps(s_j),
\]
where
\[
\eps(t)=\frac{1-t^2}{t^2}\int^{\frac{t^2}{1-t^2}}_{0} \log(1+x)dx\leq \frac{t^2}{1-t^2}.
\]
\end{lemma}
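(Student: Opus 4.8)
The plan is to transport the statement to the point $\lambda=0$ by composing with an automorphism of $\D^n$: there the pseudo‑hyperbolic polydisk $E(0,s)$ becomes the ordinary polydisk $\prod_{j=1}^n s_j\D$, one checks that the normalised restriction of $d\nu$ to $E(0,s)$ is a product of rotation‑invariant probability measures, and then the submean‑value property of plurisubharmonic functions applies coordinatewise and, together with one explicit elementary integral, produces the error term $\sumj\eps(s_j)$.

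First I would set $g=T_{-\lambda}f_L$ (with $\theta=0$), which is again holomorphic on $\D^n$ since $[1+\bar\lambda z]^L$ is zero‑free there, and write $\hat g=g\,[1-|\cdot|^2]^{L/2}$. From the identity $1-|\phi_w(z)_j|^2=(1-|w_j|^2)(1-|z_j|^2)/|1-\bar w_j z_j|^2$ applied with $w=-\lambda$ one gets $|\hat f_L\circ\phi_{-\lambda}|=|\hat g|$ on $\D^n$. Since $\phi_{-\lambda}(0)=\lambda$ and $\phi_{-\lambda}$ acts as a pseudo‑hyperbolic isometry in each coordinate, it maps $E(0,s)=\prod_{j=1}^n s_j\D$ bijectively onto $E(\lambda,s)$; as $d\nu$ is $\Aut(\D^n)$‑invariant we have $\nu(E(\lambda,s))=\nu(E(0,s))=\prod_j\frac{s_j^2}{1-s_j^2}$, and the change of variables $\xi=\phi_{-\lambda}(w)$ turns $\int_{E(\lambda,s)}\log|\hat f_L(\xi)|^2\,d\nu(\xi)$ into $\int_{E(0,s)}\log|\hat g(w)|^2\,d\nu(w)$. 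Thus the lemma reduces to proving, for an arbitrary holomorphic $g\not\equiv 0$ (the case $g\equiv 0$ being trivial),
\[
\log|\hat g(0)|^2\le \frac1{\nu(E(0,s))}\int_{E(0,s)}\log|\hat g(w)|^2\,d\nu(w)+\sumj\eps(s_j).
\]

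Then I would split $\log|\hat g(w)|^2=\log|g(w)|^2+\sumj\log(1-|w_j|^2)$, note $\hat g(0)=g(0)$, and introduce on each $s_j\D$ the rotation‑invariant probability measure $d\mu_j=\frac{1-s_j^2}{s_j^2}\,\frac{dm(w_j)}{\pi(1-|w_j|^2)^2}$, arranged so that $d\mu_1\cdots d\mu_n=\nu(E(0,s))^{-1}\,d\nu$ on $E(0,s)$. Because $\log|g|$ is plurisubharmonic, hence subharmonic in each variable separately, integrating the one‑variable submean‑value inequality successively against $d\mu_1,\dots,d\mu_n$ (each of unit mass; Fubini) gives $\log|g(0)|^2\le\nu(E(0,s))^{-1}\int_{E(0,s)}\log|g(w)|^2\,d\nu(w)$. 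For the correction the product structure yields $\nu(E(0,s))^{-1}\int_{E(0,s)}\log(1-|w_j|^2)\,d\nu=\int_{s_j\D}\log(1-|w_j|^2)\,d\mu_j$, and the substitution $x=|w_j|^2/(1-|w_j|^2)$ evaluates this integral to exactly $-\eps(s_j)$; adding the two parts gives the displayed inequality. The bound $\eps(t)\le t^2/(1-t^2)$ then follows from $\log(1+x)\le x$.

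I expect the only delicate point to be the first step: one must use $\phi_{-\lambda}$, the automorphism sending $0$ to $\lambda$ (not $\phi_\lambda$), and verify carefully that the pseudo‑hyperbolic polydisks, the invariant measure $d\nu$, and the normalising factor transform as claimed, so that $|\hat f_L\circ\phi_{-\lambda}|$ is genuinely $|\hat g|$ for the holomorphic function $g=T_{-\lambda}f_L$. Once the problem is carried to the origin, where $E(0,s)$ is an honest polydisk and $d\nu$ restricted there (suitably normalised) splits as a product of radial probability measures, the rest is the standard plurisubharmonic submean‑value property together with the elementary integral defining $\eps$, with no real obstruction — indeed the argument shows the inequality is sharp except for the submean‑value step.
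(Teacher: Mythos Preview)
Your proposal is correct and follows exactly the approach the paper intends: reduce to $\lambda=0$ via the automorphism $\phi_{-\lambda}$ using the invariance of $d\nu$ and the identity $|\hat f_L\circ\phi_{-\lambda}|=|\hat g|$, then apply the submean-value property of $\log|g|$ against the product of radial probability measures and compute the weight correction $\int_{s_j\D}\log(1-|w_j|^2)\,d\mu_j=-\eps(s_j)$ by the substitution $x=|w_j|^2/(1-|w_j|^2)$. The paper itself gives no proof here, merely citing the identical argument for the ball in \cite{BMP}*{Lemma 11}, and your write-up matches that scheme step for step.
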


\begin{proof}[Proof of Lemma \ref{controlmean}]
According to Lemma~\ref{controlmax}(a), except for an exceptional event of probability $e^{-c\spj}$, there is $\lambda\in E:=E(z_0,s)$ such that
\[
-\sumj \frac{s_j^2}{1-s_j^2}<\log  |\hat f_L(\lambda)|^2.
\]
Therefore, using Lemma \ref{invlemma},
\[
-\sumj \frac{s_j^2}{1-s_j^2}<\frac{1}{\nu(E)}\int_{E}\log  |\hat f_L(\xi)|^2 d\nu(\xi)+\sumj \frac{s_j^2}{1-s_j^2} .
\]
Hence
\[
0 < \frac{1}{\nu(E)}\int_{E}\log |\hat f_L(\xi)|^2 d\nu(\xi) +2 \sumj \frac{s_j^2}{1-s_j^2},
\]
and separating the positive and negative parts of the logarithm we obtain:
\[
\frac{1}{\nu(E)}\int_{E}\log^{-}  |\hat f_L(\xi)|^2 d\nu(\xi)
\leq \frac{1}{\nu(E)}\int_{E}\log^{+} |\hat f_L(\xi)|^2 d\nu(\xi)+2 \sumj \frac{s_j^2}{1-s_j^2} .
\]
Finally, again by Lemma \ref{controlmax}, outside another exceptional event of probability $e^{-c\spj}$,
\begin{align*}
\frac{1}{\nu(E)}\int_{E}\left|\log  |\hat f_L(\xi)|^2 \right|d\nu(\xi)&\leq \frac{2}{\nu(E)}\int_{E}\log^{+}  |\hat f_L(\xi)|^2 d\nu(\xi)+2 
\sumj \frac{s_j^2}{1-s_j^2}\\
& \leq 2\max_{E}\log^{+}  |\hat f_L|^2+2 \sumj \frac{s_j^2}{1-s_j^2}\leq 5 (\sumj) \max_j \frac{s_j^2}{1-s_j^2} \\
&\leq 5(\sumj) n\nu(E)^{1/n}.
\end{align*}
\end{proof}

\section{The hole theorem. Proof of Theorem~\ref{holethm}}

The upper bound is a direct consequence of the results in the previous section. Letting $U=E(0,r)$ and applying Corollary \ref{conseqSLD} with $\delta\mu(U)$ instead of $\delta$ we get
\[
 \mathbb P\left[Z_{f_L}\cap E(0,r)=\emptyset\right]\leq\mathbb P\left[|I_L(U)- \nu(U)\sumj|>\delta \nu(U)\sumj \right]\leq e^{-C_{2}\spj}.
\]

The method to prove the lower bound is standard (see \cite{HKPV}*{Theorem 7.2.3} and \cite{ST1}): we shall choose three events forcing $f_L$ to have a hole $E(0,r)$ and then we shall see that the probability of such events is at least $e^{-C_{1}\spj}$. 

Our starting point is the estimate
\[
 |f_L(z)|\geq |a_0|-\left|\sum_{\alpha\in J_1} a_\alpha\prod_{j=1}^n \left(\frac{\Gamma(L_j+\alpha_j)}{\alpha_j! \Gamma(L_j)}\right)^{1/2} z_j^{\alpha_j}\right|-
 \left|\sum_{\alpha\in J_2} a_\alpha\prod_{j=1}^n \left(\frac{\Gamma(L_j+\alpha_j)}{\alpha_j! \Gamma(L_j)}\right)^{1/2} z_j^{\alpha_j}\right|\ ,
\]
where, for constants $C_j$ to be chosen later, 
\begin{align*}
 J_1&=\bigl\{\alpha\neq 0 : \alpha_j\leq C_j L_j,\ j=1,\dots,n \bigr\}, \\
 J_2&=\bigl\{\alpha  : \exists j\in\{1,\dots,n\},\ \alpha_j> C_j L_j, \bigr\}\ .
\end{align*}

The first event is
\[
 E_1:=\left\{\ |a_0|\geq 1\right\}\ ,
\]
which has probability
\[
\mathbb P[E_1]=\mathbb P[|a_0|^2\geq 1]=e^{-1}.
\]

The second event corresponds to the tail of the power series of $f_L$. Here we use the notations of the previous sections. Let, as in the previous section, 
\[
 J_2^m=\{\alpha\in J_2 ; |\alpha|_\infty=m\} .
\]
Then, for $z\in E(0,r)$,
\begin{align*}
 S_3:&=\left|\sum_{\alpha\in J_2} a_\alpha \prod_{j=1}^n \left(\frac{\Gamma(L_j+\alpha_j)}{\alpha_j! \Gamma(L_j)}\right)^{1/2} z_j^{\alpha_j}\right| \leq \sum_{\alpha\in J_2} |a_\alpha| \prod_{j=1}^n \left(\frac{\Gamma(L_j+\alpha_j)}{\alpha_j! \Gamma(L_j)}\right)^{1/2} r_j^{\alpha_j}\\
 &\leq\sum_{m\geq C_* L_*} \left(\frac{\Gamma(L_*+m)}{m!\Gamma(L_*)}\right)^{n/2} r_0^m \bigl(\sum_{\alpha\in J_2^m} |a_\alpha|\bigr) \ .
\end{align*}
By the same arguments as in \cite{BMP}*{Section 3}, for $ C_j$ big enough and for $m\geq C_*L_*$
\[
 \frac{\Gamma(L_*+m)}{m! \Gamma(L_*)}\leq \left[\frac{m^{L_*/m}}{\Gamma(L)^{1/m}}\right]^m .
\]
By Stirling, for $m\geq C_*L_*$,
\[
 \frac{m^{L_*/m}}{\Gamma(L)^{1/m}}\leq (eC_*)^{1/C_*} K^{\frac 1{2C_*}},
\]
where $K=\max\limits_{x>0} x^{1/x}= e^{-1/e}$.

Let $h(C):= (eC)^{1/C} K^{\frac 1{2C}}$ and take $C_*$ big enough so that 
\[
 \bigl( h(C_*)\bigr)^{n/2} r_0\leq (1-\delta)^2\ .
\]
Then
\[
 S_3\leq \sum_{m\geq C_*L_*} \bigl(h(C_*)\bigr)^{\frac{mn}2} r_0^m \bigl(\sum_{\alpha\in J_2^m} |a_\alpha|\bigr) \leq \sum_{m\geq C_*L_*} (1-\delta)^m\bigl(\sum_{\alpha\in J_2^m} |a_\alpha|\bigr).
\]
Now impose the event
\[
 E_2=\bigl\{|a_\alpha|\leq\frac mn\ \forall \alpha\in J_2^m,\ \forall m\geq C_* L_*\bigr\}\ .
\]
Under the event $E_2$, essentially by \eqref{i2m},
\[
 S_3\leq \sum_{m\geq C_* L_*}  (1-\delta)^m m^n
\]
and there exists $C_*$ big enough so that 
\[
 S_3\leq \frac 14\ .
\]

We shall see next that $\mathbb P[E_2]$ is big. We have
\begin{align*}
 \log\mathbb P(E_2)&= \sum_{m\geq cL_*} \sum_{\alpha\in J_2^m} \log\bigl(1-e^{-\frac 2n m}\bigr)\simeq 
 - \sum_{m\geq C_*L_*} \sum_{\alpha\in J_2^m} e^{-\frac 2n m}\\
 &\gtrsim \sum_{m\geq C_*L_*}  n m^{n-1} e^{-\frac 2n m} ,
\end{align*}
thus for $C_*$ big enough $\mathbb P(E_3)\geq 1/2$.

The third event takes care of the middle terms in the power series of $f_L$. Let
\[
E_3:=\left\{\ |a_\alpha|^2<\frac{[1-r^2]^{L}}{4\prod_{j=1}^n C_j L_j}\quad \forall \alpha\in J_1\right\}.
\]
Using Cauchy-Schwarz's inequality we get, as in previous computations:
\begin{align*}
S_2&:= \left|\sum_{\alpha\in J_1} a_{\alpha} \prod_{j=1}^n \left(\frac{\Gamma(L_j+\alpha_j)}{\alpha_j! \Gamma(L_j)}\right)^{1/2} z_j^{\alpha_j}\right|\leq
\left(\sum_{\alpha\in J_1}|a_{\alpha}|^2\right)^{1/2}
\left(\sum_{\alpha\in J_1} \prod_{j=1}^n  \frac{\Gamma(L_j+\alpha_j)}{\alpha_j! \Gamma(L_j)}  r_j^{2\alpha_j} \right)^{1/2}\\
& \leq \left(\sum_{\alpha\in J_1} |a_{\alpha}|^2\right)^{1/2} \left(\prod_{j=1}^n\sum_{\alpha_j=0}^\infty \frac{\Gamma(L_j+\alpha_j)}{\alpha_j! \Gamma(L_j)}  r_j^{2\alpha_j}  \right)^{1/2}
\leq \left(\sum_{\alpha\in J_1}|a_{\alpha}|^2\right)^{1/2}[1-r^2]^{-L/2}.
\end{align*}
Under the event $E_3$,
\[
\sum_{\alpha\in J_1}|a_{\alpha}|^2\leq \sum_{\alpha\in J_1}\frac{1}{4} [1-r^2]^{L}
=\frac{1}{16}[1-r^2]^{L},
\]
and therefore
\[
S_3\leq \frac{1}{2}.
\]
Since $1-e^{-x}\geq x/2$ for $x\in(0,1/2)$, we get
\begin{align*}
\mathbb P[E_2]&=\prod_{\alpha\in J_1}\Bigl\{1-\exp\bigl(-\frac{[1-r^2]^{L}}{4\prod_{j=1}^n C_j L_j}\bigr)\Bigr\}\geq \prod_{\alpha\in J_1} \frac{[1-r^2]^{L}}{8\prod_{j=1}^n C_j L_j}=\left(\frac{[1-r^2]^{L}}{8\prod_{j=1}^n C_j L_j}\right)^{\prod_{j=1}^n C_j L_j}\\
&\geq \exp\Bigl[-c\prodj\bigl(\sumj\log(\frac 1{1-r_j^2})+\log 8+\sum_{j=1}^n\log (C_j L_j)\bigr)\Bigr]\geq e^{-c\spj}\ .
\end{align*}

Finally,
\[
\mathbb P[E_1\cap E_2\cap E_3]\geq \mathbb P[E_1] \mathbb P[E_2] \mathbb P[E_3] \geq e^{-c\spj},
\]
and under this event $|f_L(z)|\geq 1-1/2-1/4 >0$.

\begin{bibdiv}

\begin{biblist}

\bib{Bu-T13}{book}{
   author={Buckley, Jeremiah},
   title={Random zero sets of analytic functions and traces of functions in Fock spaces},
   series={Ph.D. Thesis},
   volume={},
   note={},
   publisher={Universitat de Barcelona},
   place={Barcelona},
   date={2013},
   pages={119},
   isbn={},
   review={},
}

\bib{Bu13}{article}{
   author={Buckley, Jeremiah},
   title={Fluctuations in the zero set of the hyperbolic Gaussian analytic function},
   journal={Int. Math. Res. Not. IMRN to appear},
   volume={},
   date={2013},
   number={},
   pages={18},
   doi={},
}

\bib{BMP}{article}{
   author={Buckley, Jeremiah},
   author={Massaneda, Xavier},
   author={Pridhnani, Bharti},
   title={Gaussian Analytic functions in the unit ball},
   journal={http://arxiv.org/abs/1402.1566},
   volume={},
   date={2014},
   number={},
   pages={},
   doi={},
}

\bib{HKPV}{book}{
   author={Hough, John Ben},
   author={Krishnapur, Manjunath},
   author={Peres, Yuval},
   author={Vir{\'a}g, B{\'a}lint},
   title={Zeros of Gaussian analytic functions and determinantal point
   processes},
   series={University Lecture Series},
   volume={51},
   publisher={American Mathematical Society},
   place={Providence, RI},
   date={2009},
   pages={x+154},
   isbn={978-0-8218-4373-4},
   review={\MR{2552864 (2011f:60090)}},
}

\bib{NS}{article}{
   author={Nazarov, Fedor},
   author={Sodin, Mikhail},
   title={Fluctuations in random complex zeroes: asymptotic normality
   revisited},
   journal={Int. Math. Res. Not. IMRN},
   date={2011},
   number={24},
   pages={5720--5759},
   issn={1073-7928},
   review={\MR{2863379 (2012k:60103)}},
}

\bib{Sha}{book}{
   author={Shabat, Boris V.},
   title={Introduction to complex analysis. Part II},
   series={Translations of Mathematical Monographs},
   volume={110},
   note={Functions of several variables;
   Translated from the third (1985) Russian edition by J. S. Joel},
   publisher={American Mathematical Society, Providence, RI},
   date={1992},
   pages={x+371},
   isbn={0-8218-4611-6},
   review={\MR{1192135 (93g:32001)}},
}

\bib{SZ99}{article}{
   author={Shiffman, Bernard},
   author={Zelditch, Steve},
   title={Distribution of zeros of random and quantum chaotic sections of
   positive line bundles},
   journal={Comm. Math. Phys.},
   volume={200},
   date={1999},
   number={3},
   pages={661--683},
   issn={0010-3616},
   review={\MR{1675133 (2001j:32018)}},
   doi={10.1007/s002200050544},
}

\bib{SZ08}{article}{
   author={Shiffman, Bernard},
   author={Zelditch, Steve},
   title={Number variance of random zeros on complex manifolds},
   journal={Geom. Funct. Anal.},
   volume={18},
   date={2008},
   number={4},
   pages={1422--1475},
   issn={1016-443X},
   review={\MR{2465693 (2009k:32019)}},
   doi={10.1007/s00039-008-0686-3},
}

\bib{SZ10}{article}{
   author={Shiffman, Bernard},
   author={Zelditch, Steve},
   title={Number variance of random zeros on complex manifolds, II: smooth
   statistics},
   journal={Pure Appl. Math. Q.},
   volume={6},
   date={2010},
   number={4, Special Issue: In honor of Joseph J. Kohn.},
   pages={1145--1167},
   issn={1558-8599},
   review={\MR{2742043 (2011m:32030)}},
   doi={10.4310/PAMQ.2010.v6.n4.a10},
}

\bib{SZZ}{article}{
   author={Shiffman, Bernard},
   author={Zelditch, Steve},
   author={Zrebiec, Scott},
   title={Overcrowding and hole probabilities for random zeros on complex
   manifolds},
   journal={Indiana Univ. Math. J.},
   volume={57},
   date={2008},
   number={5},
   pages={1977--1997},
   issn={0022-2518},
   review={\MR{2463959 (2010b:32027)}},
   doi={10.1512/iumj.2008.57.3700},
}

\bib{Sod}{article}{
   author={Sodin, Mikhail},
   title={Zeros of Gaussian analytic functions},
   journal={Math. Res. Lett.},
   volume={7},
   date={2000},
   number={4},
   pages={371--381},
   issn={1073-2780},
   review={\MR{1783614 (2002d:32030)}},
}

\bib{ST1}{article}{
   author={Sodin, Mikhail},
   author={Tsirelson, Boris},
   title={Random complex zeroes. I. Asymptotic normality},
   journal={Israel J. Math.},
   volume={144},
   date={2004},
   pages={125--149},
   issn={0021-2172},
   review={\MR{2121537 (2005k:60079)}},
   doi={10.1007/BF02984409},
}

\bib{ST3}{article}{
   author={Sodin, Mikhail},
   author={Tsirelson, Boris},
   title={Random complex zeroes. III. Decay of the hole probability},
   journal={Israel J. Math.},
   volume={147},
   date={2005},
   pages={371--379},
   issn={0021-2172},
   review={\MR{2166369 (2007a:60028)}},
   doi={10.1007/BF02785373},
}

\bib{Stoll}{book}{
   author={Stoll, Manfred},
   title={Invariant potential theory in the unit ball of ${\bf C}^n$},
   series={London Mathematical Society Lecture Note Series},
   volume={199},
   publisher={Cambridge University Press},
   place={Cambridge},
   date={1994},
   pages={x+173},
   isbn={0-521-46830-2},
   review={\MR{1297545 (96f:31011)}},
   doi={10.1017/CBO9780511526183},
}

\end{biblist}
\end{bibdiv}

\end{document}